\newcommand{\R}{{\mathbb R}}
\newcommand{\ds}{\displaystyle}
\newcommand{\no}{\nonumber}
\newcommand{\be}{\begin{eqnarray}}
\newcommand{\ben}{\begin{eqnarray*}}
\newcommand{\en}{\end{eqnarray}}
\newcommand{\enn}{\end{eqnarray*}}
\newcommand{\ba}{\backslash}
\newcommand{\pa}{\partial}
\newcommand{\ov}{\overline}
\newcommand{\g}{\gamma}
\newcommand{\G}{\Gamma}
\newcommand{\Om}{\Omega}
\newcommand{\om}{\omega}
\newcommand{\al}{\alpha}
\newcommand{\la}{\lambda}
\newcommand{\wi}{\widetilde}
\newtheorem{remark}[theorem]{Remark}
\begin{document}
\renewcommand{\theequation}{\arabic{section}.\arabic{equation}}
\title{\bf Direct and inverse obstacle scattering problems in a piecewise homogeneous medium}
\author{Xiaodong Liu\thanks{LSEC and Institute of Applied Mathematics, AMSS,
Chinese Academy of Sciences,Beijing 100190, China
({\tt lxd230@163.com}).}
\and
Bo Zhang\thanks{LSEC and Institute of Applied Mathematics,
AMSS, Chinese Academy of Sciences, Beijing 100190, China
({\tt b.zhang@amt.ac.cn}).}
}
\maketitle

\begin{abstract}
This paper is concerned with the problem of scattering of time-harmonic acoustic waves from
an impenetrable obstacle in a piecewise homogeneous medium. The well-posedness of the direct
problem is established, employing the integral equation method and then used, in conjunction
with the representation in a combination of layer potentials of the solution, to prove a priori
estimates of solutions on some part of the interface between the layered media.
The inverse problem is also considered in this paper. An uniqueness result is obtained for the
first time in determining both the penetrable interface and the impenetrable obstacle
with its physical property from a knowledge of the far field pattern for incident plane waves.
In doing so, an important role is played by the a priori estimates of the solution for the
direct problem.


\begin{keywords}
Uniqueness, piecewise homogeneous medium, acoustic, Holmgren's uniqueness theorem,
inverse scattering.
\end{keywords}

\begin{AMS}
35P25, 35R30
\end{AMS}

\end{abstract}

\pagestyle{myheadings}
\thispagestyle{plain}
\markboth{X. LIU, B. ZHANG}{Inverse scattering in piecewise homogeneous medium}

\section{Introduction}
\setcounter{equation}{0}

In this paper, we consider the problem of scattering of
time-harmonic acoustic plane waves by an impenetrable obstacle
in a piecewise homogeneous medium. In practical
applications, the background might not be homogeneous and then may
be modeled as a layered medium. A medium of this type that is a
nested body consisting of a finite number of homogeneous layers
occurs in various areas of applications such as radar, remote
sensing, geophysics, and nondestructive testing.

To give a precise description of the problem, let $\Om_{2}\subset\R^3$
denote the impenetrable obstacle which is an open bounded region
with a $C^{2}$ boundary $S_{1}$ and let $\R^3\ba\ov{\Om_{2}}$ denote the
the background medium which is divided by means of a closed $C^{2}$
surface $S_{0}$ into two connected domains $\Om_{0}$ and $\Om_{1}$ (see Figure \ref{fig1}).
Here, $\Om_{0}$ is the unbounded homogeneous medium and $\Om_{1}$ is the bounded homogeneous one.
We assume that the boundary $S_{1}$ of the obstacle $\Om_{2}$ has a
dissection $S_{1}=\ov{\G}_{0}\cup\ov{\G}_{1}$, where $\G_{0}$ and $\G_{1}$
are two disjoint, relatively open subsets of $S_{1}$.
Furthermore, the Dirichlet and impedance boundary conditions with
the surface impedance a nonnegative continuous function $\la\in C(\G_{1})$
are specified on $\G_{0}$ and $\G_{1}$, respectively.
Note that the case $\G_{1}=\emptyset$ corresponds to
a {\em sound-soft} obstacle and the case $\G_{0}=\emptyset,\;\la=0$
leads to a Neumann boundary condition which corresponds to a {\em sound-hard} obstacle.

The scattering of time-harmonic acoustic waves in a two-layered
medium in $\R^3$ is now modeled by the Helmholtz equation with
boundary conditions on the interface $S_{0}$ and boundary $S_{1}$:
 \be
 \label{0HE0}\Delta u+k_{0}^{2}u=0&&\qquad \mbox{in}\ \Om_{0},\\
 \label{0HE1}\Delta v+k_{1}^{2}v=0&&\qquad \mbox{in}\ \Om_{1},\\
 \label{0tbc}u-v=0,\;\;\frac{\pa u}{\pa\nu}-\la_0\frac{\pa v}{\pa\nu}=0&&\qquad\mbox{on}\;S_0,\\
 \label{Bv}\mathscr{B}(v)=0&&\qquad \mbox{on}\; S_{1},\\
 \label{0rc}\lim_{r\rightarrow\infty}r(\frac{\pa u^{s}}{\pa r}-ik_{0}u^{s})=0&& r=|x|
 \en
where $\nu$ is the unit outward normal to the interface $S_{0}$ and boundary $S_{1}$,
$\la_0$ is a positive constant. Here, the total field $u=u^{s}+u^{i}$ is given as
the sum of the unknown scattered wave $u^{s}$ which is required to satisfy the
Sommerfeld radiation condition (\ref{0rc}) and incident plane wave $u^{i}=e^{ik_{0}x\cdot d}$,
where $k_{j}$ is the positive wave number given by $k_{j}=\om_{j}/c_{j}$ in terms of the frequency
$\om_{j}$ and the sound speed $c_{j}$ in the corresponding region $\Om_{j}\;(j=0,1)$.
The distinct wave numbers $k_{j}\;(j=0,1)$ correspond to the fact that
the background medium consists of two physically different materials.
On the interface $S_{0}$, the so-called "transmission condition"
(\ref{0tbc}) is imposed, which represents the continuity of the medium
and equilibrium of the forces acting on it.
The boundary condition $\mathscr{B}(v)=0$ on $S_{0}$ is understood as:
 \be
 \label{0dbc}v=0&&\qquad\mbox{on}\ \G_{0},\\
 \label{0ibc}\frac{\pa v}{\pa\nu}+i\la v=0&&\qquad \mbox{on}\ \G_{1}.
 \en
Thus, the boundary condition (\ref{Bv}) is a general and realistic one and
allows that the pressure of the total wave $v$ vanishes on $\G_{0}$ and the normal
velocity is proportional to the excess pressure on the coated part $\G_{1}$.
\begin{figure}[htbp]
\begin{center}
\scalebox{0.35}{\includegraphics{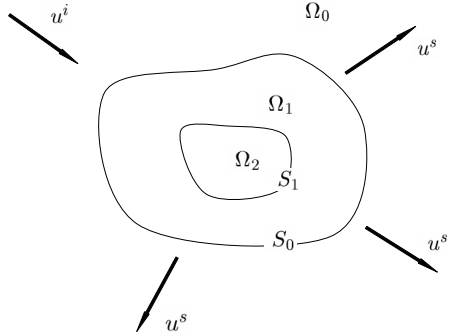}}
\caption{Scattering in a two-layered background medium }\label{fig1}
\end{center}
\end{figure}

The {\em direct problem} is to seek a pair of functions
$u\in C^{2}(\Om_{0})\cap C^{1,\al}(\ov{\Om_{0}})$
and $v\in C^{2}(\Om_{1})\cap C^{1,\al}(\ov{\Om_{1}})$ satisfying (\ref{0HE0})-(\ref{0rc}).
By the variational method, the well-posedness (existence, uniqueness and stability) of the
direct problem has been established in \cite{AS96} for the Dirichlet boundary condition
and in \cite{LZH} for a general mixed boundary condition (\ref{Bv}).
In the present paper, an integral equation method is employed to establish
the well-posedness of the direct problem. This result is also used, in conjunction with the
representation in a combination of layer potentials of the solution, to prove a priori
estimates of the solution on some part of the interface $S_{0}$, which plays an important role
in the proof of the uniqueness result for our inverse problem later on.

Further, it is known that $u^{s}(x)$ has the following asymptotic representation
 \be\label{1.5}
u^{s}(x,d)=\frac{e^{ik_{0}|x|}}{|x|}\left\{u^{\infty}(\widehat{x},d)
+O(\frac{1}{|x|})\right\}\; \mbox{as }\;|x|\rightarrow\infty
 \en
uniformly for all directions $\widehat{x}:=x/|x|$, where the
function $u^{\infty}(\widehat{x},d)$ defined on the unit sphere $S$
is known as the far field pattern with $\widehat{x}$ and $d$
denoting, respectively, the observation direction and the incident direction.

The {\em inverse problem} we consider in this paper is, given the wave
numbers $k_{j}$ ($j=0,1$), the positive constant $\la_{0}$ and the far field pattern
$u^{\infty}(\widehat{x},d)$ for all incident plane waves with
incident direction $d\in S$, to determine the obstacle $\Om_{2}$ with
its physical property $\mathscr{B}$ and the interface $S_{0}$.
As usual in most of the inverse problems, the first question to ask in this
context is the identifiability, that is, whether an inaccessible obstacle $\Om_{2}$
with its physical property $\mathscr{B}$ and the interface $S_{0}$ can be
identified from a knowledge of the far-field pattern.
Mathematically, the identifiability is the uniqueness issue which is of theoretical
interest and is required in order to proceed to efficient numerical methods of solutions.

Since the first uniqueness result given by Schiffer in 1967 for a {\em sound-soft} obstacle \cite{CK,LP},
there has been an extensive study in this direction in the literature; see, e.g.
\cite{CK06,CS83,Gintides,GK96,Isakov90,Isakov,Po01,Ramm,Ramm93,Ramm95,Rondi03,Sleeman82,SU04}
for scattering in a homogeneous medium, \cite{H98,KP,N07} for scattering
in an inhomogeneous medium and \cite{LZou} for scattering by special obstacles
such as balls and polyhedra.
However, there are few uniqueness results for inverse obstacle scattering in a
piecewise homogeneous medium. For the case of a known piecewise homogeneous medium,
Yan and Pang \cite{YP} established a uniqueness result for the inverse scattering problem
of determining a {\em sound-soft} obstacle based on Schiffer's idea;
their method can not be extended to other boundary conditions.
They obtained a uniqueness result for the case of a {\em sound-hard} obstacle in a
two-layered background medium in \cite{PY} using a generalization of Schiffer's method.
However, their method is hard to be extended to the case of a multilayered background medium
and seems unreasonable to require the interior wave number to be in an interval.
Recently in \cite{LZH}, based on a generalization of the mixed reciprocity relation,
we proved that both the obstacle $\Om_{2}$ and its physical property $\mathscr{B}$ can be
uniquely recovered from a knowledge of the far field pattern for incident plane waves.
This seems to be appropriate for a number of applications where the
physical nature of the obstacle is unknown.
The tools and the uniqueness result developed in \cite{LZH} can also be extended to inverse
electromagnetic scattering problems \cite{LZ}.
For the case of an unknown piecewise homogeneous medium,
Athanasiadis, Ramm and Stratis \cite{ARS} and Yan \cite{YG} proved that the interfaces
between the layered media can be determined uniquely by the corresponding far field pattern
in the special case when the impenetrable obstacle does not exist.

However, to the authors' knowledge, no uniqueness result is available for determining
both the obstacle embedded in the piecewise homogeneous medium and the interfaces
between the layered media from a knowledge of the far field pattern for incident plane waves.
In this paper, we will prove for the first time that both the inaccessible
obstacle $\Om_{2}$ with its physical property $\mathscr{B}$ and the interface $S_{0}$
can be uniquely determined by a knowledge of the far-field pattern.
We remark that the results obtained in this paper are also available for both the 2D case and
the case of a multilayered medium and can be proved similarly.

The remaining part of the paper is organized as follows. In the next section,
we will establish the well-posedness of the direct scattering problem, employing the
integral equation method. With the help of the representation in a combination of
layer potentials of the solution, a priori estimates of solutions are also obtained
on some part of the interface between the layered media.
Section \ref{sec3} is devoted to the proof of the result on the unique determination
of both the obstacle $\Om_2$ with its physical property $\mathscr{B}$ and the surface
$S_0$ from a knowledge of the far field pattern for incident plane waves.


\section{The direct scattering problem}\label{sec2}
\setcounter{equation}{0}

In this section we first establish the well-posedness of the direct problem,
employing the integral equation method and then make use of the representation in a
combination of layer potentials of the solution to prove some a priori estimates of the solution
which plays an important role in the inverse problem.
We shall use $C$ to denote a generic constant whose values may change in different
inequalities but always bounded away from infinity.

As incident fields $u^{i}$, plane waves and point sources (cf. (\ref{Phidef}) below) are of
special interest. Denote by $u^{s}(\cdot,d)$ the scattered field for an incident plane wave
$u^{i}(\cdot,d)$ with incident direction $d\in S$ and by $u^{\infty}(\cdot,d)$ the corresponding
far field pattern. The scattered field for an incident point source $\Phi(\cdot,z)$ with source
point $z\in \R^3$ is denoted by
$u^{s}(\cdot; z)$ and the corresponding far field pattern by
$\Phi^{\infty}(\cdot,z)$.

The direct problem is to look for a pair of functions $u\in C^{2}(\Om_{0})\cap C^{1,\al}(\ov{\Om_{0}})$
and $v\in C^{2}(\Om_{1})\cap C^{1,\al}(\ov{\Om_{1}})$ satisfying the following boundary value problem:
 \be\label{HE0}
\Delta u+k_{0}^{2}u=0&&\qquad \mbox{in}\ \Om_{0},\\ \label{HE1}
\Delta v+k_{1}^{2}v=0&&\qquad \mbox{in}\ \Om_{1},\\ \label{tbc}
u-v=f,\;\;\frac{\pa u}{\pa\nu}-\la_0\frac{\pa v}{\pa\nu}=g&&\qquad\mbox{on}\;S_0,\\ \label{dbc}
v=0&&\qquad\mbox{on}\ \G_{0},\\  \label{ibc}
\frac{\pa v}{\pa\nu}+i\la v=0&&\qquad \mbox{on}\ \G_{1},\\  \label{rc}
\lim_{r\rightarrow\infty}r(\frac{\pa u}{\pa r}-ik_{0}u)=0,&& r=|x|.
 \en
Here, we assume that $k_{0}, k_{1}$ and $\la_{0}$ are given positive constants and
that $f\in C^{1,\al}(S_{0})$ and $g\in C^{0,\al}(S_{0})$ are given functions in
H\"{o}lder spaces with exponent $0<\al<1$.

\begin{remark}\label{r2} {\rm
The problem of scattering of the incident plane wave $\ds u^{i}=e^{ik_{0}x\cdot d}$
is a particular case of the problem (\ref{HE0})-(\ref{rc}). In particular,
the scattered field $u^{s}$ satisfies the problem (\ref{HE0})-(\ref{rc}) with
$\ds u=u^s,\;f=-u^{i}|_{S_{0}},\;g=-\frac{\pa u^{i}}{\pa\nu}|_{S_{0}}$.
}
\end{remark}

The following uniqueness result has been established in \cite{LZH} (see Theorem 2.3 of \cite{LZH}).

\begin{theorem}\label{uni.direct}
The boundary value problem $(\ref{HE0})-(\ref{rc})$ admits at most one solution.
\end{theorem}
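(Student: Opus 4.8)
The plan is to prove uniqueness by showing that the difference of two solutions must vanish identically. Suppose $(u_1,v_1)$ and $(u_2,v_2)$ both solve the homogeneous problem, i.e. the problem $(\ref{HE0})$--$(\ref{rc})$ with $f=g=0$. Setting $u:=u_1-u_2$ and $v:=v_1-v_2$, linearity gives a solution of the homogeneous boundary value problem, and the goal becomes to prove $u\equiv 0$ in $\Om_0$ and $v\equiv 0$ in $\Om_1$. The natural tool is a Green's-identity energy argument combined with Rellich's lemma and the unique continuation principle.

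**First I would** integrate by parts in the bounded region $\Om_1$. Applying Green's first identity to $v$ and its complex conjugate $\ov{v}$ over $\Om_1$, using $\Delta v=-k_1^2 v$, yields an identity relating $\int_{\Om_1}(|\nabla v|^2-k_1^2|v|^2)\,dx$ to the boundary integral $\int_{S_0\cup S_1}\ov{v}\,\frac{\pa v}{\pa\nu}\,ds$. On $S_1$ the homogeneous boundary conditions $(\ref{dbc})$--$(\ref{ibc})$ are used: on $\G_0$ we have $v=0$, and on $\G_1$ we have $\frac{\pa v}{\pa\nu}=-i\la v$, so the $S_1$-contribution becomes $-i\int_{\G_1}\la|v|^2\,ds$. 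Taking the imaginary part of the whole identity and using that $\la\ge 0$ is real, the interior terms are real and drop out, leaving a relation between $\mathrm{Im}\int_{S_0}\ov{v}\,\frac{\pa v}{\pa\nu}\,ds$ and $-\int_{\G_1}\la|v|^2\,ds$.

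**Next I would** perform the analogous computation in the unbounded region $\Om_0$, where the radiation condition $(\ref{rc})$ is essential. Applying Green's identity to $u$ over the region between $S_0$ and a large sphere $S_R$ of radius $R$, and letting $R\to\infty$, the standard consequence of the Sommerfeld radiation condition gives that $\mathrm{Im}\int_{S_R}\ov{u}\,\frac{\pa u}{\pa r}\,ds\to k_0\int_S|u^\infty|^2\,ds\ge 0$. Combining this with the $S_0$-integral and then invoking the transmission conditions $(\ref{tbc})$ in homogeneous form, namely $u=v$ and $\frac{\pa u}{\pa\nu}=\la_0\frac{\pa v}{\pa\nu}$ on $S_0$, links the two boundary integrals across $S_0$. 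Since $\la_0>0$ is real, adding $\la_0$ times the $\Om_1$-identity to the $\Om_0$-identity makes the $S_0$-contributions cancel in the imaginary part, forcing $k_0\int_S|u^\infty|^2\,ds+\la_0\int_{\G_1}\la|v|^2\,ds=0$. Both terms being nonnegative, each must vanish, so $u^\infty\equiv 0$.

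**The main obstacle**, and the concluding step, is upgrading $u^\infty\equiv 0$ to $u\equiv 0$ and then transferring this to $v$. By Rellich's lemma together with the radiation condition, $u^\infty\equiv 0$ forces $u\equiv 0$ in the unbounded component $\Om_0$. The delicate part is propagating this vanishing through the interface $S_0$ into $\Om_1$: on $S_0$ the homogeneous transmission conditions then give $v=u=0$ and $\frac{\pa v}{\pa\nu}=\frac{1}{\la_0}\frac{\pa u}{\pa\nu}=0$, i.e. $v$ has vanishing Cauchy data on $S_0$. Since $v$ satisfies the Helmholtz equation $\Delta v+k_1^2 v=0$ in $\Om_1$, unique continuation (Holmgren's theorem, given the analyticity of solutions in the homogeneous medium) forces $v\equiv 0$ throughout $\Om_1$. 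This completes the argument; I expect the careful handling of the $S_0$ boundary terms and the verification that all geometric and regularity hypotheses (the $C^2$ surfaces and the $C^{1,\al}$ regularity up to the boundary) justify the integrations by parts to be where the real work lies, though the authors may simply cite \cite{LZH} since the statement is quoted from there.
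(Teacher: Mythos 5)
Your proof is correct, but there is nothing in this paper to compare it against line by line: the paper does not prove Theorem~\ref{uni.direct} at all, it simply quotes it from \cite{LZH} (Theorem~2.3 there), exactly as you anticipated in your closing remark. Your argument is the standard one and is almost certainly the same route as the cited proof: Green's first identity in $\Om_1$ with the homogeneous conditions on $\G_0$ and $\G_1$, Green's identity in $\Om_0\cap B_R$ with the Sommerfeld radiation condition, and the homogeneous transmission conditions on $S_0$ (where the positivity of $\la_0$ is exactly what lets the two interface integrals cancel in the imaginary part), yielding $k_0\int_S|u^\infty|^2\,ds+\la_0\int_{\G_1}\la|v|^2\,ds=0$, hence $u^\infty\equiv0$; then Rellich's lemma and analyticity give $u\equiv0$ in the connected domain $\Om_0$, and the resulting vanishing Cauchy data of $v$ on $S_0$ together with Holmgren's theorem give $v\equiv0$ in the connected domain $\Om_1$. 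This is also precisely the technique the present paper uses in the proof of Lemma~\ref{lem3.3} and alludes to inside the proof of Theorem~\ref{well-posedness} (``arguing similarly as in the proof of Theorem 2.3 in \cite{LZH}''). One small point to pin down: the sign of the $\G_1$ contribution depends on the orientation of $\nu$ on $S_1$ (the paper's $\nu$ points out of the obstacle $\Om_2$, i.e.\ it is the \emph{inward} normal for $\Om_1$); your intermediate statement that this contribution is $-i\int_{\G_1}\la|v|^2\,ds$ should be stated with that convention made explicit, since an error in this sign would produce $k_0\int_S|u^\infty|^2\,ds-\la_0\int_{\G_1}\la|v|^2\,ds=0$ and the argument would collapse --- but your final identity carries the correct signs, so this is a matter of presentation, not substance.
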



Denote by $\Phi_{j}$ the fundamental solution of the Helmholtz equation
with wave number $k_{j}\;(j=0,1)$, which is given by
 \be\label{Phidef}
 \Phi_{j}(x,y)=\frac{e^{ik_{j}|x-y|}}{4\pi |x-y|}, \qquad \ x,y\in \R^{3},x\neq y.
 \en
For $i,j=0,1$ define the single- and double-layer operators $S_{i,j}$ and $K_{i,j}$, respectively, by
 \ben
 (S_{i,j}\phi)(x):= \int_{S_{0}}\Phi_{j}(x,y)\phi(y)ds(y)\qquad &\;x\in S_{i},\\
 (K_{i,j}\phi)(x):= \int_{S_{0}}\frac{\pa\Phi_{j}(x,y)}{\pa\nu(y)}\phi(y)ds(y)\qquad &\;x\in S_{i}
 \enn
and the normal derivative operators $K^{'}_{i,j}$ and $T_{i,j}$ by
 \ben
 (K^{'}_{i,j}\phi)(x):=\frac{\pa}{\pa\nu(x)}\int_{S_{0}}\Phi_{j}(x,y)\phi(y)ds(y)\qquad &\;x\in S_{i},\\
 (T_{i,j}\phi)(x):=\frac{\pa}{\pa\nu(x)}
      \int_{S_0}\frac{\pa\Phi_j(x,y)}{\pa\nu(y)}\phi(y)ds(y)\qquad&\;x\in S_{i}.
 \enn
For $i,j=0,1$ define the single- and double-layer operators $\wi{S}_{i,j}$
and $\wi{K}_{i,j}$, respectively, by
 \be\label{wiSij}
 (\wi{S}_{i,j}\phi)(x):= \int_{\G_{j}}\Phi_{1}(x,y)\phi(y)ds(y)\qquad &\;x\in S_{i},\\ \label{wiKij}
 (\wi{K}_{i,j}\phi)(x):= \int_{\G_{j}}\frac{\pa\Phi_{1}(x,y)}{\pa\nu(y)}\phi(y)ds(y)\qquad &\;x\in S_{i}
 \en
and the normal derivative operators $\wi{K}^{'}_{i,j}$ and $\wi{T}_{i,j}$ by
 \ben
 (\wi{K}^{'}_{i,j}\phi)(x):=\frac{\pa}{\pa\nu(x)}
   \int_{\G_{j}}\Phi_{1}(x,y)\phi(y)ds(y)\qquad &\; x\in S_{i},\\
 (\wi{T}_{i,j}\phi)(x):= \frac{\pa}{\pa\nu(x)}
   \int_{\G_{j}}\frac{\pa\Phi_{1}(x,y)}{\pa \nu(y)}\phi(y)ds(y)\qquad&\; x\in S_{i}.
 \enn

For the mapping properties of these operators in the spaces of continuous and H\"{o}lder continuous
functions we refer to Section 3.1 in \cite{CK} or Chapter 2 in \cite{CK83}.

\begin{theorem}\label{well-posedness}
The boundary value problem $(\ref{HE0})-(\ref{rc})$ has a unique solution.
Moreover, there exists a positive constant $C=C(\la_0, \Om_1, \al)$ such that
\be\label{estimate}
\|u\|_{C^{1,\al}(\ov{\Om_{0}})}+\|v\|_{C^{1,\al}(\ov{\Om_{1}})}\leq
 C(\|f\|_{C^{1,\al}(S_{0})}+\|g\|_{C^{0,\al}(S_{0})}).
\en
\end{theorem}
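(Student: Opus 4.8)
Uniqueness for the boundary value problem is already supplied by Theorem~\ref{uni.direct}, so the remaining task is to prove existence together with the a priori bound \eqref{estimate}. The plan is to reformulate \eqref{HE0}--\eqref{rc} as a system of boundary integral equations and then to apply the Riesz--Fredholm theory. I would look for $u$ and $v$ in combined single- and double-layer (Brakhage--Werner) form. In the unbounded region set, for $x\in\Om_0$, $u(x)=\int_{S_0}\bigl[\pa_{\nu(y)}\Phi_0(x,y)-i\eta\,\Phi_0(x,y)\bigr]\varphi(y)\,ds(y)$, which solves \eqref{HE0} and the radiation condition \eqref{rc} automatically; in the bounded layer set, for $x\in\Om_1$, $v(x)=\int_{S_0}\bigl[\pa_{\nu(y)}\Phi_1(x,y)-i\eta\,\Phi_1(x,y)\bigr]\psi(y)\,ds(y)+\int_{S_1}\bigl[\pa_{\nu(y)}\Phi_1(x,y)-i\eta\,\Phi_1(x,y)\bigr]\chi(y)\,ds(y)$, which solves \eqref{HE1}. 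Here $\eta>0$ is a coupling parameter to be fixed so as to exclude interior irregular frequencies, and the unknowns are the densities $\varphi$ on $S_0$, $\psi$ on $S_0$ and $\chi$ on $S_1=\ov{\G}_0\cup\ov{\G}_1$.

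Letting $x$ tend to $S_0$, $\G_0$ and $\G_1$ and inserting the resulting boundary traces into the transmission condition \eqref{tbc}, the Dirichlet condition \eqref{dbc} and the impedance condition \eqref{ibc}, by means of the standard jump relations for layer potentials, I would obtain a linear system of the form $(I+A)\rho=b$ for the density vector $\rho=(\varphi,\psi,\chi)$ in a product of H\"older spaces, where $b$ is assembled from $f$ and $g$ and depends on them boundedly. The operator $A$ is built from $S_{i,j},K_{i,j},K'_{i,j},T_{i,j}$ and $\wi S_{i,j},\wi K_{i,j},\wi K'_{i,j},\wi T_{i,j}$. All terms coupling $S_0$ to $S_1$ (and to the arcs $\G_j$) have smooth kernels, because the interface $S_0$ and the obstacle boundary $S_1$ are disjoint $C^2$ surfaces, and hence are compact; the weakly singular operators $S_{0,0},K_{0,0},K'_{0,0}$ on $S_0$ are likewise compact in the H\"older spaces by the mapping properties recalled from \cite{CK,CK83}.

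The main obstacle is that the normal derivative in the second relation of \eqref{tbc} and in \eqref{ibc} produces the hypersingular operators $T_{0,0}$, $T_{0,1}$ and $\wi T_{i,j}$, which are bounded of order one but not compact, so the naive structure is not yet of the form identity plus compact. I would deal with this by regularization. Writing $T_{0,0}\varphi-\la_0 T_{0,1}\psi=T_{0,0}(\varphi-\la_0\psi)+\la_0(T_{0,0}-T_{0,1})\psi$ and using that the difference $T_{0,0}-T_{0,1}$ of two hypersingular operators with distinct wave numbers has its common Laplacian principal part cancelled and is therefore compact, only a single hypersingular operator $T_{0,0}$ survives; this one I would regularize through its factorization by surface tangential derivatives and the single-layer operator (a parametrix), turning it into an invertible operator modulo a compact perturbation. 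A further technical point is the dissection $S_1=\ov{\G}_0\cup\ov{\G}_1$, which creates an edge between the Dirichlet part $\G_0$ and the impedance part $\G_1$; I would accommodate this by working with the mapping properties of the operators $\wi S_{i,j},\wi K_{i,j},\wi K'_{i,j},\wi T_{i,j}$ on the open arcs $\G_j$ in the appropriate H\"older setting, again relying on \cite{CK,CK83}. After these reductions $A$ (equivalently a left-regularized version of the system) is compact, so $I+A$ is Fredholm of index zero.

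Finally, injectivity of $I+A$ follows from Theorem~\ref{uni.direct}: if $b=0$, then the potentials $u,v$ solve the homogeneous boundary value problem and hence vanish identically; invoking the jump relations together with the uniqueness of the associated interior problems guaranteed by the choice of $\eta$, one concludes that all densities vanish. By the Riesz--Fredholm theory the system $(I+A)\rho=b$ is therefore uniquely solvable with a bounded inverse, whence $\|\rho\|\le C\|b\|\le C(\|f\|_{C^{1,\al}(S_0)}+\|g\|_{C^{0,\al}(S_0)})$. Since the layer potentials map their densities boundedly into $C^{1,\al}(\ov{\Om_0})$ and $C^{1,\al}(\ov{\Om_1})$, this produces the solution pair $u,v$ and the estimate \eqref{estimate}, with the constant $C$ depending only on $\la_0$, $\Om_1$ and $\al$ through the geometry and the norms of the integral operators.
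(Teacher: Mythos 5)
Your overall architecture (combined layer potentials, jump relations, Riesz--Fredholm theory, injectivity deduced from Theorem~\ref{uni.direct}) is the same as the paper's, but your choice of ansatz differs from the paper's in exactly the way that creates the central technical obstruction, and your plan for removing that obstruction has genuine gaps. The paper's ansatz (\ref{u})--(\ref{v}) uses the \emph{same} pair of densities $(\psi,\phi)$ on $S_0$ for both $u$ and $v$, with the weight $\la_0$ placed in front of the double layer for $u$ precisely so that, when the condition $\pa u/\pa\nu-\la_0\,\pa v/\pa\nu=g$ in (\ref{tbc}) is imposed, the hypersingular operators occur only in the difference $T_{0,0}-T_{0,1}$ acting on one and the same density; this difference is compact because the leading singularities of $\Phi_0$ and $\Phi_1$ cancel. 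Likewise, on the impedance part $\G_1$ the double layer is applied only to the smoothed density $\wi{S}^{2}\varphi$, so that only $\wi{T}_{1,1}\wi{S}^{2}$ (compact) ever appears. Consequently the paper's system (\ref{matrix-form}) is of the form identity plus compact \emph{with no regularization at all}. Your ansatz, with independent Brakhage--Werner densities $\varphi$ (for $u$) and $\psi$ (for $v$) on $S_0$ and a plain combined density $\chi$ on $S_1$, unavoidably leaves genuine hypersingular operators in the system: $T_{0,0}(\varphi-\la_0\psi)$ from the transmission condition, which you do notice, and also $\wi{T}_{1,1}\chi$ from the impedance condition (\ref{ibc}), which your proposal never regularizes.

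Two concrete problems follow. First, your parametrix regularization of $T_{0,0}$: left-composing with a single-layer operator and using the Calder\'on-type relation does yield a Fredholm equation, but unique solvability of the \emph{regularized} system gives solvability of the original one only if the regularizer is injective; the single-layer operator at wave number $k_0$ fails to be injective whenever $k_0^{2}$ is an interior Dirichlet eigenvalue, so you would have to regularize with the potential-theoretic ($k=0$) or imaginary-wave-number single layer and then transfer injectivity back --- none of which appears in your sketch, and it is exactly the irregular-frequency phenomenon your combined-potential ansatz was meant to preclude. Second, the hypersingular operator on $\G_1$ lives on an open surface piece whose closure meets $\ov{\G}_0$ along an edge; the Maue factorization you invoke is a closed-surface tool, and H\"older regularity of densities up to the edge is delicate, which is why the paper builds the smoothing $\wi{S}^{2}$ directly into the ansatz instead of regularizing afterwards. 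Finally, your one-line claim that once $u=v=0$ ``all densities vanish'' hides a substantial argument: in the paper this step requires Green's identity in $\Om_2$ with an imaginary-part argument, a maximum-principle argument for a harmonic single-layer potential to get $\varphi=0$ from $\wi{S}\varphi=0$, and an auxiliary transmission problem for a new pair $(\wi{u},\wi{v})$ to get $\psi=\phi=0$; the analogue for your ansatz would have to be constructed anew. So the strategy is sound in outline, but as proposed the proof does not go through without significant additional work at precisely the points where the paper's carefully designed ansatz does the work for free.
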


\begin{proof}
The uniqueness of solutions follows from Theorem \ref{uni.direct}.
We now prove the existence of solutions by using the integral equation method.
Following \cite{CK83} and \cite{CK} we seek a solution in the form
\be\label{u}
u(x)&=&\int_{S_{0}}\left\{\la_0\frac{\pa\Phi_{0}(x,y)}{\pa\nu(y)}\psi(y)
     +\Phi_{0}(x,y)\phi(y)\right\}ds(y)\hspace{0.6cm}\qquad x\in\Om_{0},\qquad\\ \no
v(x)&=&\int_{S_{0}}\left\{\frac{\pa\Phi_{1}(x,y)}{\pa\nu(y)}\psi(y)
       +\Phi_{1}(x,y)\phi(y)\right\}ds(y)\\ \no
 &&+\int_{\G_0}\left\{\frac{\pa\Phi_1(x,y)}{\pa\nu(y)}\chi(y)
       -i\eta\Phi_1(x,y)\chi(y)\right\}ds(y)\\ \label{v}
 &&+\int_{\G_{1}}\left\{\Phi_{1}(x,y)\varphi(y)
   +i\eta\frac{\pa\Phi_{1}(x,y)}{\pa\nu(y)}(\wi{S}^{2}\varphi)(y))
   \right\}ds(y)\qquad x\in\Om_{1} 
\en
with four densities $\psi\in C^{1,\al}(S_{0})$, $\phi\in C^{0,\al}(S_{0})$,
$\chi\in C^{1,\al}(\G_{0})$, $\varphi\in C^{1,\al}(\G_{1})$ and a real
coupling parameter $\eta\neq0$. By $\wi{S}$ we denote the single-layer
operator (\ref{wiSij}) in the potential theoretic limit case $k_{1}=0$.
Then from the jump relations we see that the potentials $u$ and $v$ defined above
solve the boundary value problem $(\ref{HE0})-(\ref{rc})$ provided
the densities $\psi,\phi,\chi,\varphi$ satisfy the system of integral equations
\be\no
&&\psi+\mu(\la_0K_{0,0}-K_{0,1})\psi+\mu(S_{0,0}-S_{0,1})\phi\\ \label{psi}
&&\qquad\qquad\qquad\qquad\qquad-\mu(\wi{K}_{0,0}-i\eta\wi{S}_{0,0})\chi+M_1\varphi
  =\mu f\quad\mbox{on}\;\;S_0,\\ \no
&&\phi-\mu\la_0(T_{0,0}-T_{0,1})\psi-\mu(K^{'}_{0,0}-\la_0K^{'}_{0,1})\phi\\ \label{phi}
&&\qquad\qquad\qquad\qquad-\mu\la_0(T_{0,0}-i\eta K^{'}_{0,0})\chi+M_2\varphi
  =-\mu g\quad\mbox{on}\;\;S_0,\\ \label{chi}
&&\chi+2K_{1,1}\psi+2S_{1,1}\phi+2(\wi{K}_{1,0}-i\eta \wi{S}_{1,0})\chi
  +M_3\varphi=0\quad\mbox{on}\;\;\G_0,\\ \label{varphi}
&&\varphi-2(T_{1,1}+i\la K_{1,1})\psi-2(K^{'}_{1,1}+i\la S_{1,1})\phi
  +M_4\chi+M_5\varphi=0\quad\mbox{on}\;\;\G_1
\en
with $\mu={2}/({\la_0+1})$, where
\ben
M_1&=&-\mu(\wi{S}_{0,1}+i\eta\wi{K}_{0,1}\wi{S}^{2}),\\
M_2&=&-\mu(\wi{K}^{'}_{0,1}+i\eta\wi{T}_{0,1}\wi{S}^{2}),\\
M_3&=&2(\wi{S}_{1,1}+i\eta\wi{K}_{1,1}\wi{S}^{2}),\\
M_4&=&-2(\wi{T}_{1,0}-i\eta\wi{K}^{'}_{1,0}+i\la\wi{K}_{1,0}+\la\eta\wi{S}_{1,0}),\\
M_5&=&-2(\wi{K}_{1,1}+i\eta\wi{T}_{1,1}\wi{S}^2-\la\eta\wi{S}^2
    +i\la\wi{S}_{1,1}-\la\eta\wi{K}_{1,1}\wi{S}^2).
\enn

Define the product space
$X:=C^{1,\al}(S_{0})\times C^{0,\al}(S_{0})\times C^{1,\al}(\G_{0})\times C^{1,\al}(\G_{1})$
and introduce the operator $A:X\rightarrow X$ given by
 \ben
 A:=\left(\begin{matrix}\mu(\la_0K_{0,0}-K_{0,1})&\mu(S_{0,0}-S_{0,1})
        &-\mu(\wi{K}_{0,0}-i\eta\wi{S}_{0,0})&M_1\\
      -\mu\la_0(T_{0,0}-T_{0,1})&-\mu(K^{'}_{0,0}-\la_0K^{'}_{0,1})
      &-\mu\la_0(T_{0,0}-i\eta K^{'}_{0,0})&M_2\\
      2K_{1,1}&2S_{1,1}&2(\wi{K}_{1,0}-i\eta\wi{S}_{1,0})&M_3\\
      -2(T_{1,1}+i\la K_{1,1})&-2(K^{'}_{1,1}+i\la S_{1,1})&M_4&M_5
 \end{matrix}\right)
 \enn
The operator $A$ is compact since all its entries are compact.
The system (\ref{psi})-(\ref{varphi}) can be rewritten in the abbreviated form
\be\label{matrix-form}
 (I+A)U=R,
\en
where $I$ is the identity operator, $U=(\psi,\phi,\chi,\varphi)^{T}$ and $R=(\mu f, -\mu g, 0, 0)^{T}$.
Thus, the Riesz-Fredholm theory is applicable.  We now prove the uniqueness of solutions to
the system (\ref{matrix-form}).
To this end, let $U$ be a solution of the homogeneous system corresponding to (\ref{matrix-form})
(that is, the system (\ref{matrix-form}) with $R=0$). Then it is enough to show that $U=0$.

We first prove that $\chi=0$ on $\G_{0}$ and $\varphi=0$ on $\G_{1}$.
From the system (\ref{matrix-form}) or (\ref{psi})-(\ref{varphi}) with $\mu f=-\mu g=0$ (since $R=0$)
it is known that $u$ and $v$ defined in (\ref{u}) and (\ref{v}) satisfy the problem
$(\ref{HE0})-(\ref{rc})$ with $f=g=0$. Thus, by the uniqueness Theorem \ref{uni.direct},
$u=0$ in $\Om_0$ and $v=0$ in $\Om_1$. Note that $v$, given by (\ref{v}), can also be defined for
$x\in\Om_2$ and satisfies the Helmholtz equation $\Delta v+k_{1}^{2}v=0$ in $\Om_2$.
Then the jump relations yield that
 \ben
 -v_{-}=\chi, \; -\frac{\pa v_{-}}{\pa\nu}=i\eta\chi \qquad \mbox{on}\; \G_{0},\\
 -v_{-}=i\eta\wi{S}^{2}\varphi, \; -\frac{\pa v_{-}}{\pa\nu}=-\varphi \qquad \mbox{on}\; \G_{1}.
 \enn
Interchanging the order of integration and using Green's first theorem over $\Om_2$, we obtain
\ben
i\eta\left\{\int_{\G_{0}}|\chi|^{2}ds+\int_{\G_{1}}|\wi{S}\varphi|^{2}ds\right\}
 &=&i\eta\left\{\int_{\G_{0}}|\chi|^{2}ds
    +\int_{\G_{1}}\varphi\wi{S}^{2}\ov{\varphi}ds\right\}\\
 &=&\int_{\G_0}\ov{v_{-}}\frac{\pa v_{-}}{\pa\nu}ds
    +\int_{\G_{1}}\ov{v}_{-}\frac{\pa v_{-}}{\pa\nu}ds\\
 &=&\int_{\Om_2}{|\nabla v|^{2}-k^{2}_{1}|v|^{2}}dx.
\enn
Taking the imaginary part of this equation gives that $\chi=0$ on $\G_{0}$
and $\wi{S}\varphi=0$ on $\G_{1}$. The single-layer potential
 \ben
 w(x):=\int_{\G_{1}}\frac{1}{4\pi|x-y|}\varphi(y)ds(y)
 \enn
with density $\varphi$ is continuous throughout $\R^{3}$, harmonic in $\R^{3}\ba\G_{1}$ and
vanishes on $\G_{1}$ and at infinity. Therefore, by the maximum-minimum principle for harmonic
functions, we have $w=0$ in $\R^3$ and the jump relation yields $\varphi=0$.

Now the system (\ref{matrix-form}) becomes
\ben
 \left [I +\left ( \begin{matrix}\mu(\la_0K_{0,0}-K_{0,1})&\mu(S_{0,0}-S_{0,1})\\
      -\mu\la_0(T_{0,0}-T_{0,1})&-\mu(K^{'}_{0,0}-\la_0K^{'}_{0,1})
 \end{matrix} \right )\right ]\left ( \begin{matrix}\psi\\ \phi\end{matrix} \right )
 =\left ( \begin{matrix}0\\0\end{matrix} \right ).
\enn
Define
\ben
 \wi{v}(x)&:=&\int_{S_{0}}\left\{\frac{\pa\Phi_{1}(x,y)}{\pa\nu(y)}\psi(y)
          +\Phi_{1}(x,y)\phi(y)\right\}ds(y),\qquad x\in\Om_0,\\
 \wi{u}(x)&:=&-\int_{S_{0}}\left\{\frac{\pa\Phi_{0}(x,y)}{\pa\nu(y)}\psi(y)
          +\frac{1}{\la_0}\Phi_{0}(x,y)\phi(y)\right\}ds(y),\qquad x\in\R^3\ba\ov{\Om}_0.
 \enn
Then by the jump relations for single- and double-layer potentials we have
\be\label{htbc1}
\wi{v}-v=\psi,&&\;\;\;\frac{1}{\la_0}u+\wi{u}=\psi\qquad\mbox{on}\;\;S_0,\\ \label{htbc2}
\frac{\pa \wi{v}}{\pa\nu}-\frac{\pa v}{\pa\nu}=-\phi,&&\;\;\;
\frac{\pa u}{\pa\nu}+\la_0\frac{\pa\wi{u}}{\pa\nu}=-\phi\qquad\mbox{on}\;\;S_0.
\en
Hence, $\wi{v}$ and $\wi{u}$ solve the homogeneous transmission problem
 \ben
 \Delta \wi{v}+k_{1}^{2}\wi{v}=0\;\; \mbox{in}\;\; \Om_{0},\qquad
\Delta \wi{u}+k_{0}^{2}\wi{u}=0\;\; \mbox{in}\;\; \R^{3}\ba\ov{\Om}_0
 \enn
with the transmission conditions (noting that $u=0$ in $\Om_0$ and $v=0$ in $\Om_1$)
 \ben
\wi{v}-\wi{u}=0,\qquad\frac{\pa\wi{v}}{\pa\nu}=\la_0\frac{\pa\wi{u}}{\pa\nu}\;\;\mbox{on}\;\;S_0.
 \enn
Arguing similarly as in the proof of Theorem 2.3 in \cite{LZH} we can show that
$\wi{v}=0$ in $\Om_0$ and $\wi{u}=0$ in $\R^{3}\ba\ov{\Om_{0}}$.
Hence we conclude from (\ref{htbc1}) and (\ref{htbc2}) that $\psi=\phi=0$ on $S_0$.

Thus, the injectivity of the operator $I+A$ is proved, and by the Riesz-Fredholm theory
$(I+A)^{-1}$ exists and is bounded in $X$. From this we deduce that
\ben
&&\|\psi\|_{C^{1,\al}(S_{0})}+\|\phi\|_{C^{0,\al}(S_{0})}
   +\|\chi\|_{C^{1,\al}(\G_{0})}+\|\varphi\|_{C^{1,\al}(\G_{1})}\\
&&\qquad\qquad\qquad\leq C(\|f\|_{C^{1,\al}(S_{0})}+\|g\|_{C^{0,\al}(S_{0})}).
 \enn
The estimate (\ref{estimate}) follows from (\ref{u}), (\ref{v}) and Theorem 3.3 in \cite{CK}.
\end{proof}

We now make use of the representation (\ref{v}) of the solution $v$ to derive an a priori estimate of
the solution $v$ on some part of $S_{0}$, which is necessary in proving the
uniqueness result for the inverse problem in the next section.

Let $x^{\ast}\in S_{0}$ be an arbitrarily fixed point and let us introduce the space $C_{0}(S_{0})$
which consists of all continuous functions $h\in C(S_{0}\ba\{x^{\ast}\})$ with the property that
 \ben
\lim_{x\rightarrow x^{\ast}}[|x-x^{\ast}|h(x)]
 \enn
exists. It can easily be seen that $C_{0}(S_{0})$ is a Banach space equipped with
the weighted maximum norm
 \ben
 \|h\|_{C_{0}(S_{0})}:=\sup_{x\neq x^{\ast}, x\in S_{0}}|(x-x^{\ast})h(x)|.
 \enn
%

\begin{lemma}\label{lemapriori}
Given two functions $f\in C^{1,\al}(S_{0})$ and $g\in C^{0,\al}(S_{0})$.
Let $u\in C^{2}(\Om_{0})\cap C^{1,\al}(\ov{\Om_{0}})$ and
$v\in C^{2}(\Om_{1})\cap C^{1,\al}(\ov{\Om_{1}})$
be the solution of the problem $(\ref{HE0})-(\ref{rc}).$
Let $x^{\ast}\in S_{0}$ and let $B_{1},$ $B_{2}$ be two small balls with center $x^{\ast}$
and radii $r_{1},\;r_{2}$, respectively, satisfying that $r_1<r_2$.
Then there exists a constant $C>0$ such that
\ben
&&\|v\|_{\infty, S_{0}\ba B_{2}}+\left\|\frac{\pa v}{\pa\nu}\right\|_{\infty, S_{0}\ba B_{2}}\\
&&\qquad\qquad\leq C(\|f\|_{C_{0}(S_{0})}+\|g\|_{C_{0}(S_{0})}
  +\|f\|_{1,\al,S_{0}\ba B_{1}}+\|g\|_{0,\al,S_{0}\ba B_{1}})
\enn
\end{lemma}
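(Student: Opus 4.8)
\emph{The plan.} I would split the boundary data into a part concentrated at $x^{\ast}$ and a part that is regular away from $x^{\ast}$, estimate the regular part by the well-posedness estimate (\ref{estimate}), and estimate the singular part directly from the layer-potential representation (\ref{v}). By linearity, $v=v_1+v_2$ denotes the corresponding splitting of the solution.

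\emph{Decomposition and the regular part.} First I would fix a cut-off $\zeta\in C^{\infty}(S_0)$ with $\zeta\equiv1$ on $S_0\cap B_1$ and $\mathrm{supp}\,\zeta\subset S_0\cap B_2$, and set $(f,g)=(\zeta f,\zeta g)+((1-\zeta)f,(1-\zeta)g)$. Let $(u_2,v_2)$ solve (\ref{HE0})--(\ref{rc}) with the regular data $((1-\zeta)f,(1-\zeta)g)$. Since $1-\zeta$ vanishes on $B_1$, these data are supported in $S_0\ba B_1$ and their H\"older norms on $S_0$ are bounded by $C(\|f\|_{1,\al,S_0\ba B_1}+\|g\|_{0,\al,S_0\ba B_1})$; hence (\ref{estimate}) gives $\|v_2\|_{C^{1,\al}(\ov{\Om_1})}\le C(\|f\|_{1,\al,S_0\ba B_1}+\|g\|_{0,\al,S_0\ba B_1})$, which already controls both $\|v_2\|_{\infty,S_0\ba B_2}$ and $\|\pa v_2/\pa\nu\|_{\infty,S_0\ba B_2}$.

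\emph{The singular part.} It remains to bound $v_1$ and $\pa v_1/\pa\nu$ on $S_0\ba B_2$, where $(u_1,v_1)$ carries the data $(\zeta f,\zeta g)$, which is supported in $B_2$ and satisfies $\|\zeta f\|_{C_0(S_0)}+\|\zeta g\|_{C_0(S_0)}\le\|f\|_{C_0(S_0)}+\|g\|_{C_0(S_0)}$. I would represent $v_1$ by (\ref{v}), with densities $(\psi,\phi,\chi,\varphi)$ solving (\ref{matrix-form}) with right-hand side $(\mu\zeta f,-\mu\zeta g,0,0)^{T}$. Granting a weighted bound $\|\psi\|_{C_0(S_0)}+\|\phi\|_{C_0(S_0)}+\|\chi\|_{\infty,\G_0}+\|\varphi\|_{\infty,\G_1}\le C(\|f\|_{C_0(S_0)}+\|g\|_{C_0(S_0)})$ on the densities, the bound for $v_1$ follows from elementary kernel estimates: for $x\in S_0\ba B_2$ the kernels of the $\G_0$- and $\G_1$-potentials are smooth, because $S_0$ and $S_1$ are disjoint, and $\chi,\varphi$ are bounded; in the $S_0$-potentials one splits the integral over $S_0\cap B_1$, where $|x-y|\ge r_2-r_1$ makes the kernel bounded and the weighted bound $|\psi(y)|,|\phi(y)|\le C|y-x^{\ast}|^{-1}$ is integrable on the surface, and over $S_0\ba B_1$, where the densities are bounded and the kernels are weakly singular. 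This yields $\|v_1\|_{\infty,S_0\ba B_2}\le C(\|f\|_{C_0(S_0)}+\|g\|_{C_0(S_0)})$.

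\emph{The normal derivative and the main obstacle.} For $\pa v_1/\pa\nu$ on $S_0\ba B_2$ the same splitting controls every term except the near-diagonal part of the normal derivative of the $S_0$ double-layer, i.e.\ the hypersingular operator $T$ applied to $\psi$; its kernel is not integrable near $y=x$ and requires $\psi$ to be H\"older continuous near the evaluation point. Since $(\zeta f,\zeta g)$ vanishes on $S_0\ba B_2$, I would use the pseudolocal (smoothing) property of the operators in (\ref{matrix-form})---note that the differences $S_{0,0}-S_{0,1}$ and $K_{0,0}-K_{0,1}$ have continuous kernels and the remaining entries are weakly singular---to bootstrap $\psi,\phi$ into $C^{1,\al}(S_0\ba B_1)$, resp.\ $C^{0,\al}(S_0\ba B_1)$, with norms still bounded by $C(\|f\|_{C_0(S_0)}+\|g\|_{C_0(S_0)})$; the hypersingular term is then a classical bounded integral on $S_0\ba B_2$. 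The step I expect to be the genuine obstacle is the weighted solvability granted above: one must show that $I+A$ stays boundedly invertible on the weighted space built over $C_0(S_0)$ (keeping the ordinary H\"older spaces on $\G_0,\G_1$, where no singularity occurs). Boundedness and compactness of $A$ survive because the $|y-x^{\ast}|^{-1}$ singularity is integrable on a surface and the kernels are weakly singular, while injectivity follows by repeating the uniqueness argument of Theorem \ref{well-posedness}: the potentials built from the weighted densities still solve the homogeneous problem (\ref{HE0})--(\ref{rc}), to which Theorem \ref{uni.direct} applies. The Riesz--Fredholm theory in the weighted space then provides the density bound, and combining the regular and singular estimates gives the assertion.
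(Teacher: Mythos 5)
Your overall architecture parallels the paper's more than you might expect: the paper also works in the weighted space $C_{0}(S_{0})$, also controls $v$ on $S_{0}\ba B_{2}$ by splitting the $S_0$-potentials with cutoff functions into a far-from-diagonal part and a part handled by the H\"older mapping properties of $K_{0,1}$, $S_{0,1}$, and also bootstraps the densities through the system (\ref{matrix-form}) to H\"older regularity away from $x^{\ast}$ before touching the hypersingular term in $\pa v/\pa\nu$. Your preliminary splitting of the data into $(\zeta f,\zeta g)+((1-\zeta)f,(1-\zeta)g)$ is a harmless cosmetic variant; the paper keeps the data whole and lets the $C_0$ norms and the H\"older norms of $(f,g)$ enter through the same chain of estimates.

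The genuine gap sits exactly at the step you yourself flagged as the obstacle: injectivity of $I+A$ on the weighted space. You propose to prove it by ``repeating the uniqueness argument of Theorem \ref{well-posedness}'', i.e.\ forming the potentials (\ref{u}), (\ref{v}) from a null-space element of the weighted space and invoking Theorem \ref{uni.direct}. This does not work as stated. A density in $C_{0}(S_{0})$ is only $O(|y-x^{\ast}|^{-1})$ near $x^{\ast}$, and the double-layer kernel is $O(|x-y|^{-2})$ for $x$ off the surface, so the potentials generated by such densities are in general unbounded at $x^{\ast}$ (blowing up like $|x-x^{\ast}|^{-1}$); hence $u\notin C^{1,\al}(\ov{\Om_{0}})$, $v\notin C^{1,\al}(\ov{\Om_{1}})$, and the pair is not a solution of (\ref{HE0})--(\ref{rc}) in the sense required by Theorem \ref{uni.direct}. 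Even the jump relations and the Green's-theorem computations underlying that uniqueness proof are unjustified at $x^{\ast}$. To rescue your route you would need a removable-singularity or limiting Green's identity argument around $x^{\ast}$, which is genuine additional work you have not supplied. The paper bypasses this entirely by a duality trick: since $I+A$ is injective on the classical space $X$ (Theorem \ref{well-posedness}), the Fredholm alternative applied to the dual system $\langle X, C_{0}\rangle$ with the $L^{2}$ bilinear form gives a trivial null space for the adjoint $I+A'$ on $C_{0}$, and a second application, in the dual system $\langle C_{0}, C_{0}\rangle$, transfers triviality back to $I+A$ on $C_{0}$. Injectivity in the weighted space is thus inherited from the classical space without ever analyzing singular potentials; unless you adopt this (or supply the singularity analysis), your key density bound (\ref{C0es}) remains unproved.
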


\begin{proof}
We consider again the system (\ref{matrix-form}) of the boundary integral equations derived from
the boundary value problem (\ref{HE0})-(\ref{rc}).
In addition to the space $X$, we also consider the weighted spaces
$C_{0}:=C_{0}(S_{0})\times C_{0}(S_{0})\times C^{1,\al}(\G_{0})\times C^{1,\al}(\G_{1})$.
The matrix operator $A$ is also compact in $C_0$ since all entries of $A$ are
compact (see \cite{KK93, KP}). From the proof of Theorem \ref{well-posedness},
we know that the operator $I+A$ has a trivial null space in $X$. Therefore,
by the Fredholm alternative applied to the dual system $\langle X, C_{0}\rangle$
with the $L^{2}$ bilinear form, the adjoint operator $I+A^\prime$
has a trivial null space in $C_{0}$. By the Fredholm alternative again, but now
applied to the dual system $\langle C_{0}, C_{0}\rangle$ with the $L^{2}$ bilinear form,
the operator $I+A$ also has a trivial null space in $C_{0}$.
Hence, by the Riesz-Fredholm theory, the system (\ref{matrix-form})
is also uniquely solvable in $C_{0}$,
and the solution depends continuously on the right-hand side:
\be\no
&&\|\psi\|_{C_{0}(S_{0})}+\|\phi\|_{C_{0}(S_{0})}+\|\chi\|_{C^{1,\al}(\G_{0})}
  +\|\varphi\|_{C^{1,\al}(\G_{1})}\\ \label{C0es}
&&\qquad\qquad\qquad\leq C(\|f\|_{C_{0}(S_{0})}+\|g\|_{C_{0}(S_{0})}).
\en
From (\ref{v}) and the jump relation we find that on $S_0,$
\be\label{vs0}
 v=-\frac{1}{2}\psi+K_{0,1}\psi+S_{0,1}\phi+(\wi{K}_{0,0}-i\eta \wi{S}_{0,0})\chi
    +(\wi{S}_{0,1}+i\eta\wi{K}_{0,1}\wi{S}^{2})\varphi.
\en
Thus we have
\be\no
\|v\|_{\infty, S_{0}\ba B_{2}}&\leq& C(\|\psi\|_{\infty, S_{0}\ba B_{2}}
      +\|K_{0,1}\psi\|_{\infty, S_{0}\ba B_{2}}+\|S_{0,1}\phi\|_{\infty, S_{0}\ba B_{2}}\\ \no
&&+\|(\wi{K}_{0,0}-i\eta \wi{S}_{0,0})\chi\|_{\infty, S_{0}\ba B_{2}}
    +\|(\wi{S}_{0,1}+i\eta\wi{K}_{0,1}\wi{S}^{2})\varphi\|_{\infty, S_{0}\ba B_{2}})\\ \no
&\leq& C(\|\psi\|_{C_{0}(S_{0})}+\|K_{0,1}\psi\|_{\infty, S_{0}\ba B_{2}}
    +\|S_{0,1}\phi\|_{\infty, S_{0}\ba B_{2}}\\ \label{ves1}
&&+\|\chi\|_{C^{1,\al}(\G_{0})}+\|\varphi\|_{C^{1,\al}(\G_{1})}).
\en
We choose a function $\rho_{1}\in C^{2}(S_{0})$ such that $\rho_1(x)=0$ for $x\in S_0\ba B_1$
and $\rho_{1}(x)=1$ in the neighborhood of $x^{\ast}$.
We also choose another function $\rho_{2}\in C^{2}(S_{0})$ such that $\rho_{2}(x)=1$
for $x\in S_{0}\ba B_{2}$ and $\rho_{2}(x)=0$ in the neighborhood of $B_{1}$.
Multiplying $K_{0,1}\psi$ by $\rho_{2}$ and splitting $\psi$ up in the form
$\psi=\rho_{1}\psi+(1-\rho_{1})\psi$, we have
\be\no
\|K_{0,1}\psi\|_{\infty, S_0\ba B_2}&=&\|\rho_{2}K_{0,1}\psi\|_{\infty, S_0\ba B_2}\\ \label{K01}
&\leq& C(\|\rho_{2}K_{0,1}\rho_{1}\psi\|_{\infty, S_{0}\ba B_{2}}
    +\|K_{0,1}(1-\rho_{1})\psi\|_{\infty, S_{0}\ba B_{2}})
\en
The first term on the right-hand side of the above inequality contains only an operator with a kernel
vanishing in a neighborhood of the diagonal $x=y$, and therefore we have
\be\label{K01-1}
 \|\rho_{2}K_{0,1}\rho_{1}\psi\|_{\infty, S_{0}\ba B_{2}}\leq C\|\psi\|_{C_{0}(S_{0})}.
\en
Since the operator $K_{0,1}$ mapping $C(S_{0})$ into $C^{0,\al}(S_{0})$ is bounded (see \cite{CK}),
we find, on noting that $1-\rho_{1}$ vanishes in a neighborhood of $x^{\ast}$, that
\be\label{K01-2}
\|K_{0,1}(1-\rho_1)\psi\|_{\infty,S_0\ba B_2}\leq C\|(1-\rho_1)\psi\|_{\infty,S_0\ba B_2}
\leq C\|\psi\|_{C_{0}(S_{0})}.
\en
From (\ref{K01})-(\ref{K01-2}) it follows that
\be\label{K01-3}
 \|K_{0,1}\psi\|_{\infty, S_{0}\ba B_{2}}\leq C\|\psi\|_{C_{0}(S_{0})}.
\en
A similar argument as above gives that
\be\label{S01-3}
 \|S_{0,1}\phi\|_{\infty, S_{0}\ba B_{2}}\leq C\|\phi\|_{C_{0}(S_{0})}.
\en
Combining (\ref{K01-3})-(\ref{S01-3}) with (\ref{C0es}) and (\ref{ves1}) yields
\be\label{ves}
 \|v\|_{\infty, S_{0}\ba B_{2}}\leq C(\|f\|_{C_{0}(S_{0})}+\|g\|_{C_{0}(S_{0})}).
\en

Before proceeding to estimate ${\pa v}/{\pa\nu}$
we establish the following estimate in the spaces of H\"{o}lder continuous functions
for $(\psi, \phi, \chi, \varphi)$:
\be\no
&&\|\psi\|_{C^{1,\al}(S_{0}\ba B_{3})}+\|\phi\|_{C^{0,\al}(S_{0}\ba B_{3})}
 +\|\chi\|_{C^{0,\al}(\G_{0})}+\|\varphi\|_{C^{0,\al}(\G_{1})}\\ \label{4-tuple}
&&\qquad\leq C(\|f\|_{C_{0}(S_{0})}+\|g\|_{C_{0}(S_{0})}
 +\|f\|_{1,\al,S_{0}\ba B_{1}}+\|g\|_{0,\al,S_{0}\ba B_{1}}),
\en
where $B_{3}$ is a ball of radius $r_{3}$ and centered at $x^*$ with $r_{1}<r_{3}<r_{2}$.
We choose a function $\rho_{3}\in C^{2}(S_{0})$ such that $\rho_{3}(x)=0$ for $x\in S_{0}\ba B_{2}$
and $\rho_{3}(x)=1$ in the neighborhood of $B_{3}$.
We also choose another function $\rho_{4}\in C^{2}(S_{0})$ such that $\rho_{4}(x)=1$
for $x\in S_{0}\ba B_{2}$ and $\rho_{4}(x)=0$ in the neighborhood of $B_{3}$.
Splitting $U$ up in the form
\ben
 U=\left ( \begin{matrix}\rho_{3}\psi\\  \rho_{3}\phi\\  \chi\\  \varphi \end{matrix} \right )
   +\left ( \begin{matrix}(1-\rho_{3})\psi\\  (1-\rho_{3})\phi\\  0\\  0 \end{matrix} \right )
   :=U_{\rho_{3}}+U_{(1-\rho_{3})}
\enn
and using $W_{\rho_{4}}$ to denote the matrix $W$ with its first and second rows
multiplied by $\rho_{4}(x)$, it follows from (\ref{matrix-form}) that
\be\label{rhoU}
 U_{\rho_{4}}=R_{\rho_{4}}-A_{\rho_{4}}U_{\rho_{3}}-A_{\rho_{4}}U_{(1-\rho_{3})}.
\en
Arguing similarly as in deriving the estimate for $\|v\|_{\infty,S_0\ba B_2}$
but with two different cutoff functions $\rho_{3}$ and $\rho_{4}$
replacing $\rho_1$ and $\rho_2$, we obtain from (\ref{rhoU}) and (\ref{C0es}) that
\be\no
\|U_{\rho_{4}}\|_{0,\al}&=&\|\psi\|_{C^{0,\al}(S_{0}\ba B_{3})}+\|\phi\|_{C^{0,\al}(S_{0}\ba B_{3})}
 +\|\chi\|_{C^{0,\al}(\G_{0})}+\|\varphi\|_{C^{0,\al}(\G_{1})}\\ \no
&\leq& C(\|R_{\rho_{4}}\|_{0,\al}+\|A_{\rho_{4}}U_{\rho_{3}}\|_{0,\al}
 +\|A_{\rho_{4}}U_{(1-\rho_{3})}\|_{0,\al})\\ \no
&\leq& C(\|R_{\rho_{4}}\|_{0,\al}+\|U\|_{C_{0}})\\ \label{U0al}
&\leq& C(\|f\|_{C_{0}(S_{0})}+\|g\|_{C_{0}(S_{0})}
 +\|f\|_{1,\al,S_{0}\ba B_{1}}+\|g\|_{0,\al,S_{0}\ba B_{1}}),
\en
where $\|\cdot\|_{0,\al}$ and $\|\cdot\|_{C_{0}}$ denote the corresponding norms in the product spaces.
Now it remains to prove the estimate of $\|\psi\|_{C^{1,\al}(S_{0}\ba B_{3})}$.
Multiplying (\ref{psi}) by $\rho_{4}(x)$ we obtain, on using (\ref{rhoU}) and noting
the fact that the integral operators mapping $C^{0,\al}$ functions
into $C^{1,\al}$ functions are bounded, that
\be\no
\|\psi\|_{1,\al, S_{0}\ba B_{3}}&\leq&\|\rho_{4}\psi\|_{1,\al}\\ \no
&\leq& C[\|\rho_{4}(\la_0K_{0,0}-K_{0,1})\psi\|_{1,\al}
  +\|\rho_{4}(S_{0,0}-S_{0,1})\phi\|_{1,\al}\\ \no
&&+\|\rho_{4}(\wi{K}_{0,0}-i\eta \wi{S}_{0,0})\chi\|_{1,\al}
  +\|\rho_{4}(\wi{S}_{0,1}+i\eta\wi{K}_{0,1}\wi{S}^{2})\varphi\|_{1,\al}
  +\|\rho_{4}f\|_{1,\al}]\\ \no
&\leq& C[\|U\|_{C_{0}}+\|(1-\rho_{3})U\|_{0,\al}+\|\rho_{4}f\|_{1,\al}]\\ \no
&\leq& C[\|U\|_{C_{0}}+\|\psi\|_{C^{0,\al}(S_{0}\ba B_{3})}+\|\phi\|_{C^{0,\al}(S_{0}\ba B_{3})}
 +\|\chi\|_{C^{0,\al}(\G_{0})}\\ \label{psi1al}
&&+\|\varphi\|_{C^{0,\al}(\G_{1})}+\|f\|_{1,\al,S_{0}\ba B_{3}}].
\en
Combining (\ref{C0es}) and (\ref{U0al})-(\ref{psi1al}) yields the desired estimate (\ref{4-tuple}).

We now estimate $\ds\left\|\frac{\pa v}{\pa\nu}\right\|_{0,\al,S_{0}\ba B_{2}}$.
From (\ref{v}) and the jump relation it is seen that on $S_0,$
\be\label{pavS0}
\frac{\pa v}{\pa\nu}=\frac{1}{2}\phi+T_{0,1}\psi+K^{'}_{0,1}\phi
   +(\wi{T}_{0,0}-i\eta \wi{K}^{'}_{0,0})\chi
   +(\wi{K}^{'}_{0,1}+i\eta\wi{T}_{0,1}\wi{S}^{2})\varphi.
\en
Writing $\psi=\rho_{3}\psi+(1-\rho_{3})\psi$ and$\phi=\rho_{3}\phi+(1-\rho_{3})\phi$,
we obtain from (\ref{pavS0}) that
\ben
\left\|\frac{\pa v}{\pa\nu}\right\|_{0,\al,S_{0}\ba B_{2}}
&\leq& \left\|\rho_{4}\frac{\pa v}{\pa\nu}\right\|_{0,\al,S_{0}}\\
&\leq& C[\|\psi\|_{C_{0}(S_{0})}+\|\phi\|_{C_{0}(S_{0})}+\|(1-\rho_{3})\psi\|_{1,\al,S_{0}}\\
&&+\|(1-\rho_{3})\phi\|_{0,\al,S_{0}}
  +\|\chi\|_{C^{0,\al}(\G_{0})}+\|\varphi\|_{C^{0,\al}(\G_{1})}]\\
&\leq& C[\|\psi\|_{C_{0}(S_{0})}+\|\phi\|_{C_{0}(S_{0})}+\|\psi\|_{1,\al,S_{0}\ba B_{3}}
 +\|\phi\|_{0,\al,S_{0}\ba B_{3}}\\
&&+\|\chi\|_{C^{0,\al}(\G_{0})}+\|\varphi\|_{C^{0,\al}(\G_{1})}].
\enn
Combining this with (\ref{C0es}) and (\ref{4-tuple}) yields
\be\label{paves}
\left\|\frac{\pa v}{\pa\nu}\right\|_{0,\al,S_{0}\ba B_{2}}
\leq C(\|f\|_{C_{0}(S_{0})}+\|g\|_{C_{0}(S_{0})}
 +\|f\|_{1,\al,S_{0}\ba B_{1}}+\|g\|_{0,\al,S_{0}\ba B_{1}}).
\en
This completes the proof.
\end{proof}

\section{The inverse scattering problem}\label{sec3}
\setcounter{equation}{0}

Following the ideas of \cite{KK93} for transmission problems in a homogeneous medium and
of \cite{KP} for transmission problems in an inhomogeneous medium, we prove in this section
that the interface $S_{0}$ can be uniquely determined by the far field pattern.
Combining this with the earlier result in \cite{LZH}, we have in fact proved
that both the penetrable interface $S_{0}$ and the impenetrable obstacle $\Om_{2}$
with its physical property $\mathscr{B}$ can be uniquely determined from a knowledge of
far field pattern. To establish the uniqueness result for the inverse problem,
we need the following two lemmas, in which $\Om=\R^3\ba\Om_0$ so that $\Om_{2}\subset\Om$
and $\wi{\Om}=\R^3\ba\ov{\wi{\Om}}_0$ for some domain $\wi{\Om}_0$ with the interface
$\wi{S}_0=\pa\wi{\Om}_0\cap\pa\wi{\Om}$ and with the domain $\wi{\Om}_2\subset\wi{\Om}$.

\begin{lemma}\label{lem3.1}
Suppose the positive numbers $k_{0}$, $k_{1}$ and $\la_{0}$ are given.
For $\Om_2\subset\Om,\;\wi{\Om}_2\subset\wi{\Om}$ let $G$ be the unbounded component
of $\R^3\setminus(\ov{\Om\cup\wi{\Om}})$ and let
$u^{\infty}(\hat{x},d)=\wi{u}^{\infty}(\hat{x},d)$ for all $\hat{x},\;d\in S$
with $\wi{u}^{\infty}(\hat{x},d)$ being the far field pattern of the scattered field
$\wi{u}^s(x,d)$ corresponding to the obstacle $\wi{\Om}_2$, the interface $\wi{S}_0$
and the same incident plane wave $u^i(x,d)$.
For $z\in G$ let $(u^{s}, v)$ be the unique solution of the problem
\be
\label{lOm0}\Delta u^{s}+k_0^2u^s=0&&\;{\rm in}\;\Om_0\setminus\{z\},\\
\label{lOm1}\Delta v+k_1^2v=0&&\;{\rm in}\;\Om_1,\\
\label{ltbc}u^s_{+}-v=-\Phi_{0}(x,z),\;\;\frac{\pa u^s_+}{\pa\nu}-\la_0\frac{\pa v}{\pa\nu}
            =-\frac{\pa\Phi_{0}(x,z)}{\pa\nu}&&\;{\rm on}\;S_0,\\
\label{lbc}\mathscr{B}(v)=0&&\;{\rm on}\;S_{1},\\
\label{lrc}\lim_{r\rightarrow\infty}r(\frac{\pa u^{s}}{\pa r}-ik_{0}u^{s})=0.
\en
Assume that $(\wi{u}^{s},\wi{v})$ is the unique solution of the problem $(\ref{lOm0})-(\ref{lrc})$
with $\Om_0,\Om_1,S_0,S_1,\mathscr{B}$ replaced by
$\wi{\Om_0},\wi{\Om_1},\wi{S_0},\wi{S_1},\wi{\mathscr{B}}$, respectively.
Then we have
\be\label{3.22}
u^s(x;z)=\wi{u}^s(x;z),\qquad x\in\ov{G}.
\en
\end{lemma}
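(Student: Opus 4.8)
The plan is to establish (\ref{3.22}) by a mixed-reciprocity argument combined with the standard point-source density technique. The starting observation is that the scattered field $u^s(\cdot;z)$ for an incident point source $\Phi_0(\cdot,z)$ and its far field pattern $\Phi^\infty(\hat x,z)$ are linked to the plane-wave far field pattern $u^\infty(\hat x,d)$ through a mixed reciprocity relation of the form $\Phi^\infty(\hat x,z)=\gamma\,u^s(z,-\hat x)$ for an explicit constant $\gamma$ depending only on $k_0$. Such a relation holds for the present transmission/impedance problem; it was established in \cite{LZH} and I would invoke it here. Since by hypothesis $u^\infty(\hat x,d)=\wi u^\infty(\hat x,d)$ for all $\hat x,d\in S$, the same identity holds for the point-source far field patterns, giving $\Phi^\infty(\hat x,z)=\wi\Phi^\infty(\hat x,z)$ for all $\hat x\in S$ and all $z\in G$.

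Next I would exploit Rellich's lemma together with unique continuation. The fields $u^s(\cdot;z)$ and $\wi u^s(\cdot;z)$ both satisfy the Helmholtz equation with wave number $k_0$ and the Sommerfeld radiation condition in the unbounded domain $\Om_0\cap\wi\Om_0$, which contains $G$. Equality of their far field patterns forces, by Rellich's lemma and the fact that $G$ is connected and unbounded, that $u^s(x;z)=\wi u^s(x;z)$ for all $x\in G$ (and in fact throughout the connected component of $\R^3\setminus(\ov{\Om\cup\wi\Om})$ containing the exterior). Extending to the closure $\ov G$ uses the $C^{1,\al}$ regularity of the solutions up to the boundary guaranteed by Theorem \ref{well-posedness}. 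This yields (\ref{3.22}).

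The main obstacle, and the step requiring care, is the transfer from plane-wave data to point-source data, i.e.\ proving (or correctly citing) the mixed reciprocity relation for this specific geometry with the interface $S_0$, the impedance/Dirichlet mixed condition $\mathscr B$ on $S_1$, and the transmission condition involving $\la_0$. One must verify that the reciprocity argument — typically an application of Green's second identity over $\Om_0$ and $\Om_1$ combined with the radiation condition and the far-field asymptotics (\ref{1.5}) — goes through despite the discontinuity of the wave number across $S_0$ and the presence of the coupling constant $\la_0$ in the transmission condition (\ref{ltbc}). The key is that the transmission conditions are designed precisely so that the boundary terms on $S_0$ cancel in pairs when two such fields are paired, and the impedance condition on $\G_1$ contributes a symmetric term; I would reduce this to the reciprocity result already proved in \cite{LZH}.

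A secondary technical point is ensuring that $z\in G$ keeps the source point clear of both obstacle configurations, so that $\Phi_0(\cdot,z)$ is a legitimate incident field for both problems simultaneously and both scattered fields are well defined and real-analytic in the exterior; this is exactly what the definition of $G$ as the unbounded component of $\R^3\setminus(\ov{\Om\cup\wi\Om})$ secures, so no further hypotheses are needed.
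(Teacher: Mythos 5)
Your proposal is correct and follows essentially the same route as the paper: the mixed reciprocity relation (Lemma 3.3 of \cite{LZH}) transfers the coincidence of the plane-wave far field patterns to the point-source far field patterns, and Rellich's lemma then yields $u^{s}(x;z)=\wi{u}^{s}(x;z)$ for $x\in\ov{G}$. The only presentational difference is that the paper makes explicit the first application of Rellich's lemma --- passing from $u^{\infty}=\wi{u}^{\infty}$ to $u^{s}(x,d)=\wi{u}^{s}(x,d)$ for $x\in G$, $d\in S$ --- which your reciprocity step implicitly requires, since the relation $\Phi^{\infty}(\hat{x},z)=\gamma\,u^{s}(z,-\hat{x})$ involves the plane-wave \emph{scattered fields} evaluated at $z\in G$, not their far field patterns.
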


\begin{remark}\label{r3.2} {\rm
By Theorem \ref{well-posedness}, the problem (\ref{lOm0})-(\ref{lrc}) has
a unique solution.
}
\end{remark}

\begin{proof}
By Rellich's lemma \cite{CK}, the assumption $u^{\infty}(\hat{x},d)
=\wi{u}^{\infty}(\hat{x},d)$ for all $\hat{x},\;d\in S$ implies that
 \ben
u^s(x,d)=\wi{u}^{s}(x,d),\qquad x\in G,\;d\in S.
 \enn
For the far field pattern corresponding to incident point-sources
we have by Lemma 3.3 in \cite{LZH} that
 \ben
\Phi^{\infty}(d,z)=\wi{\Phi}^{\infty}(d,z),\qquad z\in G,\;d\in S.
 \enn
Thus, Rellich's lemma \cite{CK} implies that
 \ben
u^{s}(x;z)=\wi{u}^{s}(x;z),\qquad x\in \ov{G}.
 \enn
\end{proof}

\begin{lemma}\label{lem3.3}
Assume that $f\in L^{2}(\Om_{1}), h\in C(\G_{0}),p\in C(\G_{1})$ and $g,\eta\in C(S_{0})$ with
$\eta\neq0$ and $\eta\leq0$ on $S_{0}$. Then the following problem has a unique solution
$u\in C^{2}(\Om_{1})\cap C(\pa\Om_{1})$:
\be\label{uOm1}
\Delta u+k_{1}^{2}u=f&&\qquad {\rm in}\;\;\Om_{1},\\ \label{ubg}
\frac{\pa u}{\pa\nu}+i\eta u=g&&\qquad{\rm on}\;\;S_0,\\ \label{ubh}
u=h&&\qquad{\rm on}\;\; \G_{0},\\ \label{ubp}
\frac{\pa u}{\pa\nu}+i\la u=p&&\qquad {\rm on}\;\;\G_{1}.
\en
Furthermore, there exists a constant $C>0$ such that
\ben
\|u\|_{\infty,\ov{\Om}_1}
\leq C(\|f\|_{L^2(\Om_1)}+\|g\|_{\infty,S_0}+\|h\|_{\infty,\G_0}+\|p\|_{\infty,\G_1}).
\enn
\end{lemma}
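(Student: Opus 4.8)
The plan is to prove uniqueness first by an energy identity combined with Holmgren's uniqueness theorem, and then to obtain existence and the stated bound from the Fredholm alternative together with elliptic regularity. The one point that must be handled with care throughout is the orientation of the normal: the given $\nu$ is the exterior unit normal of $\Om_1$ on $S_0$, but on $S_1=\ov{\G}_0\cup\ov{\G}_1$ it points out of the obstacle $\Om_2$ and hence \emph{into} $\Om_1$, so the exterior normal of $\Om_1$ on $S_1$ is $-\nu$.

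For uniqueness suppose $f=g=h=p=0$. Applying Green's first theorem to $u$ over $\Om_1$ and using $\Delta u=-k_1^2u$, the $\G_0$-contribution drops out because $u=0$ there, and the remaining boundary terms yield
\[
\int_{\Om_1}\!\big(|\nabla u|^2-k_1^2|u|^2\big)\,dx=-i\int_{S_0}\eta|u|^2\,ds+i\int_{\G_1}\la|u|^2\,ds .
\]
The left-hand side is real, so equating imaginary parts gives $\int_{\G_1}\la|u|^2\,ds=\int_{S_0}\eta|u|^2\,ds$. Since $\la\ge0$ and $\eta\le0$, the left member is nonnegative while the right member is nonpositive, so both vanish. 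The hypotheses $\eta\neq0$ and $\eta\le0$ together force $\eta<0$ on $S_0$, whence $u=0$ on $S_0$, and then the boundary condition there also gives $\pa u/\pa\nu=-i\eta u=0$ on $S_0$. Thus $u$ has vanishing Cauchy data on $S_0$; as $u$ solves the Helmholtz equation near $S_0$, Holmgren's uniqueness theorem makes $u$ vanish in a neighborhood of $S_0$, and unique continuation propagates this through the connected domain $\Om_1$, giving $u\equiv0$.

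For existence I would first remove the interior source by means of the volume potential $Vf(x):=\int_{\Om_1}\Phi_1(x,y)f(y)\,dy$, which satisfies $\Delta(Vf)+k_1^2Vf=-f$ in $\Om_1$; then $u_0:=-Vf$ is a particular solution of (\ref{uOm1}), and $w:=u-u_0$ solves the same problem with $f$ replaced by $0$ and with boundary data shifted by the (controlled) traces of $u_0$. For $w$ I would set up a variational formulation in the closed subspace $H:=\{\varphi\in H^1(\Om_1):\varphi|_{\G_0}=0\}$, treating the Dirichlet condition on $\G_0$ as essential and the two impedance conditions on $S_0$ and $\G_1$ as natural. The associated sesquilinear form
\[
a(w,\varphi)=\int_{\Om_1}\!\big(\nabla w\cdot\nabla\ov{\varphi}-k_1^2 w\ov{\varphi}\big)\,dx
+i\int_{S_0}\eta\,w\ov{\varphi}\,ds-i\int_{\G_1}\la\,w\ov{\varphi}\,ds ,
\]
with the signs fixed by the normal orientation above, satisfies a G\aa rding inequality: its principal part is coercive on $H$, while the zeroth-order volume term and the boundary terms are compact perturbations by the compact embedding $H^1(\Om_1)\hookrightarrow L^2(\Om_1)$ and the compactness of the trace maps into $L^2(S_0)$ and $L^2(\G_1)$. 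Hence $a$ corresponds to an operator of the form identity-plus-compact, the Fredholm alternative applies, and the uniqueness already proved (applied to $w$) rules out nontrivial solutions of the homogeneous form; so a unique weak solution $w$ exists and depends continuously on the data in the energy norm. (An integral-equation reduction in the style of Theorem \ref{well-posedness}, with $Vf$ absorbing the source, would be an equally valid alternative.)

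Finally I would upgrade the weak solution to the stated regularity and derive the $L^\infty$ bound. Interior elliptic regularity gives $u\in H^2_{\mathrm{loc}}(\Om_1)$, and since $\Om_1\subset\R^3$ the Sobolev embedding $H^2\hookrightarrow C(\ov{\Om}_1)$ furnishes continuity up to the boundary; the estimate
\[
\|u\|_{\infty,\ov{\Om}_1}\le C\big(\|f\|_{L^2(\Om_1)}+\|g\|_{\infty,S_0}+\|h\|_{\infty,\G_0}+\|p\|_{\infty,\G_1}\big)
\]
then follows by combining the continuous dependence of $w$ in the energy norm, the bound $\|Vf\|\lesssim\|f\|_{L^2(\Om_1)}$ for the volume potential, and this embedding, with a constant $C$ depending only on $\Om_1,k_1,\la,\eta$. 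I expect the main obstacle to be this last step rather than uniqueness: one must fit three mixed boundary conditions (one essential, two natural) into a single Fredholm framework and then track all the norms so that the output is controlled by the $L^2$-norm of the interior source together with the sup-norms of the boundary data, the passage to $L^\infty$ resting on the borderline three-dimensional embedding. The uniqueness step, by contrast, is routine once the correct orientation of $\nu$ on $S_1$ is used and the strict negativity $\eta<0$ is exploited to pass from the vanishing boundary integral to zero Cauchy data on $S_0$.
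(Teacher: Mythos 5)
Your uniqueness argument is essentially the paper's own: the same energy identity over $\Om_1$ (with the correct orientation of $\nu$ on $S_1$) followed by Holmgren's theorem. One caveat: the hypothesis $\eta\neq0$ must be read as $\eta\not\equiv0$ --- in the application (Theorem \ref{thm3}) the function $\eta$ is chosen supported in $S_0\ba B_2$, hence it vanishes on part of $S_0$ --- so your claim that the hypotheses force $\eta<0$ on all of $S_0$ is a misreading. It is harmless here, since the vanishing of $\int_{S_0}\eta|u|^2\,ds$ still gives $u=\pa u/\pa\nu=0$ on the open portion where $\eta<0$, which is all that Holmgren's theorem needs; but your proof should be phrased that way, or the lemma would become inapplicable in the very place the paper uses it.

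The existence-and-estimate part, by contrast, has genuine gaps, located exactly where you predicted the difficulty would be. First, your variational formulation treats $u=h$ on $\G_0$ as an essential condition in $H:=\{\varphi\in H^1(\Om_1):\varphi|_{\G_0}=0\}$, which requires lifting the Dirichlet data into $H^1(\Om_1)$; but $h$ (and the shifted datum $h-u_0|_{\G_0}$) is only continuous, and $C(\G_0)\not\subset H^{1/2}(\G_0)$, so no such lifting need exist and the weak formulation is not well posed for the class of data the lemma allows. Second, the passage to the sup-norm bound does not go through: interior elliptic regularity gives only $u\in H^2_{\mathrm{loc}}(\Om_1)$, which says nothing about continuity up to $\ov{\Om}_1$; global $H^2(\Om_1)$ regularity is false in general for this mixed Dirichlet/impedance problem (solutions have corner-type singularities at the junction $\ov{\G}_0\cap\ov{\G}_1$); and even if it held, your energy estimate controls $w$ by $L^2$ and trace norms of the data, so the resulting constant could not be expressed through $\|g\|_{\infty,S_0}$, $\|h\|_{\infty,\G_0}$, $\|p\|_{\infty,\G_1}$ --- in three dimensions an $H^1$ bound controls no $L^\infty$ norm at all, and the Helmholtz operator admits no maximum principle to substitute for one. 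The paper avoids both problems by staying with the integral equation method: it writes $u=-Vf$ plus combined layer potentials whose densities are split into a continuous part (carrying the sup-norm data $g,h,p$) and an $H^{1/2}$ part (carrying the traces of $Vf$), and solves the resulting system by Riesz--Fredholm theory in the product space $C(\ov{\Om_1})\times C(S_0)\times H^{1/2}(S_0)\times C(\G_0)\times H^{1/2}(\G_0)\times C(\G_1)\times H^{1/2}(\G_1)$, exactly as in Theorem \ref{well-posedness}; the mapping properties of the potentials on spaces of continuous functions then deliver the $L^\infty$ estimate directly. To repair your proof you would essentially have to reproduce this splitting (or some other mechanism producing sup-norm output from sup-norm data), not merely refine the variational argument.
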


\begin{proof}
We first prove the uniqueness result, that is, $u=0$ if $f=g=h=p=0$. With the help of the
equation (\ref{uOm1}) and the boundary conditions (\ref{ubg})-(\ref{ubp}), we have
\ben
0&=&\int_{\Om_{1}}\left\{(\Delta u+k_{1}^{2}u)\ov{u}\right\}dx\\
 &=&\int_{\Om_{1}}\left\{-|\nabla u|^{2}+k_{1}^{2}|u|^{2}\right\}dx
     +\int_{S_{0}}\ov{u}\frac{\pa u}{\pa\nu}ds
     -\int_{\G_{0}}\ov{u}\frac{\pa u}{\pa\nu}ds-\int_{\G_{1}}\ov{u}\frac{\pa u}{\pa\nu}ds\\
 &=&\int_{\Om_{1}}\left\{-|\nabla u|^{2}+k_{1}^{2}|u|^{2}\right\}dx
     -i\int_{S_{0}}\eta|u|^{2}{\pa\nu}ds+i\int_{\G_{1}}\la|u|^{2}ds.
\enn
Taking the imaginary part of the above equation, we get $u=0$ on some part $\G$
of $S_{0}$ since both $\eta\neq0$ and $\eta\leq0$ on $S_{0}$ and $\la$ is a
nonnegative continuous function. By the boundary condition (\ref{ubg}) it follows that
$u={\pa u}/{\pa\nu}=0$ on $\G$. Thus, $u=0$ in $\Om_1$ by Holmgren's uniqueness theorem \cite{Kr}.

To solve the problem by means of the integral equation method we introduce the volume potential
\ben
 (Vf)(x):=\int_{\Om_{1}}\Phi_{1}(x,y)f(y)dy,\qquad x\in \Om_{1},
\enn
which defines a bounded operator $V:L^{2}(\Om_{1})\rightarrow H^{2}(\Om_{1})$
(see Theorem 8.2 in \cite{CK}). Now look for a solution in the form
\be\no
u(x)&=&-(Vf)(x)+\int_{S_{0}}\Phi_{1}(x,y)[\phi_{1}(y)+\phi_{2}(y)]ds(y)\\ \no
&&+\int_{\G_{0}}\left\{\frac{\pa\Phi_{1}(x,y)}{\pa\nu(y)}
  -i\g\Phi_{1}(x,y)\right\}(\chi_{1}(y)+\chi_{2}(y))ds(y)\\ \label{u1}
 &&+\int_{\G_{1}}\left\{\Phi_{1}(x,y)+i\g\frac{\pa\Phi_{1}(x,y)}{\pa\nu(y)}\wi{S}^{2}\right\}
   (\varphi_{1}(y)+\varphi_{2}(y))ds(y),\;\; x\in\Om_1
\en
with six densities $\phi_1\in C(S_0)$, $\phi_2\in H^{\frac{1}{2}}(S_0)$, $\chi_1\in C(\G_0)$,
$\chi_2\in H^{\frac{1}{2}}(\G_0)$, $\varphi_1\in C(\G_1)$, $\varphi_2\in H^{\frac{1}{2}}(\G_1)$
and a real coupling parameter $\g\neq0$. By $\wi{S}$ we denote the single-layer
operator (\ref{wiSij}) in the potential theoretic limit case $k_{1}=0$.

Then from the jump relations we see that the potential $u$ given by (\ref{u1})
solve the boundary value problem $(\ref{uOm1})-(\ref{ubp})$
provided the six densities satisfy the following system of integral equations:
\be\label{phi1}
\phi_{1}-2(K^{'}_{0,1}-i\eta S_{0,1})\phi_{1}
   -M_{0}(\chi_{1}+\chi_{2})-N_{0}(\varphi_{1}+\varphi_{2})
   =-2g\;\;&&\mbox{on}\;\;S_{0},\\ \label{phi2}
\phi_{2}-2(K^{'}_{0,1}-i\eta S_{0,1})\phi_{2}
   =-2\left(\frac{\pa }{\pa\nu}+i\eta\right)(Vf)\;\;&&\mbox{on}\;\;S_{0},\\ \label{chi1}
\chi_{1}+2S_{1,1}(\phi_{1}+\phi_{2})+M_{1}\chi_{1}
   +N_{1}(\varphi_{1}+\varphi_{2})=2h\;\;&&\mbox{on}\;\;\G_{0},\\ \label{chi2}
\chi_{2}+M_{1}\chi_{2}=2Vf\;\;&&\mbox{on}\;\;\G_{0},\\ \label{varphi1}
\varphi_{1}-2(K^{'}_{1,1}+i\la S_{1,1})(\phi_{1}+\phi_{2})-M_{2}(\chi_{1}+\chi_{2})
   -N_{2}\varphi_{1}=-2p\;\;&&\mbox{on}\;\;\G_{1},\\ \label{varphi2}
\varphi_2-N_1\varphi_2=-2\left(\frac{\pa}{\pa\nu}+i\la\right)(Vf)\;\;&&\mbox{on}\;\;\G_{1},
\en
where
\ben
M_0&=&2(\wi{T}_{0,0}-i\g\eta \wi{K}^{'}_{0,0}+i\eta\wi{K}_{0,0}+\g\eta\wi{S}_{0,0}),\\
N_0&=&2(\wi{K}^{'}_{0,1}+i\g\wi{T}_{0,1}\wi{S}^2+i\eta\wi{S}_{0,1}-\g\eta\wi{K}_{0,1}\wi{S}^2),\\
M_1&=&2(\wi{K}_{1,0}-i\g \wi{S}_{1,0}),\\
N_1&=&2(\wi{S}_{1,1}+i\g\wi{K}_{1,1}\wi{S}^{2}),\\
M_2&=&2(\wi{T}_{1,0}-i\g\wi{K}^{'}_{1,0}+i\la\wi{K}_{1,0}+\la\g\wi{S}_{1,0}),\\
N_2&=&2(\wi{K}_{1,1}+i\g\wi{T}_{1,1}\wi{S}^{2}-\frac{1}{2}\la\g\wi{S}^{2}
      +i\la\wi{S}_{1,1}-\la\g\wi{K}_{1,1}\wi{S}^{2}).
\enn

Precisely, we seek a solution $(u,\phi_1,\phi_2,\chi_1,\chi_2,\varphi_1,\varphi_2)\in
C(\ov{\Om_1})\times C(S_0)\times H^{\frac12}(S_0)\times C(\G_0)\times H^{\frac12}(\G_0)
\times C(\G_1)\times H^{\frac12}(\G_1):=Y$ to the system of integral equations
(\ref{u1})-(\ref{varphi2}). Similarly as in the proof of Theorem \ref{well-posedness},
it can be shown by using the Riesz-Fredholm theorem that this system has unique solution
in the space $Y$ and there exists a positive constant $C>0$ such that
\ben
\|u\|_{\infty,\ov{\Om_{1}}}&\leq& C(\|Vf\|_{\infty,\ov{\Om_{1}}}+\|g\|_{\infty, S_{0}}
 +\left\|\left(\frac{\pa }{\pa\nu}+i\eta\right)(Vf)\right\|_{H^{\frac12}(S_0\cup S_1)}\\
&&+\|h\|_{\infty, \G_{0}}+\|p\|_{\infty,\G_1})\\
&\leq& C(\|f\|_{L^{2}(\Om_{1})}+\|g\|_{\infty, S_{0}}+\|h\|_{\infty, \G_{0}}+\|p\|_{\infty, \G_{1}}).
\enn
The lemma is thus proved.
\end{proof}

We are now in a position to state and prove the main result of this section.

\begin{theorem}\label{thm3}
Suppose the positive numbers $k_{0}$, $k_{1}$ and $\la_{0}$ ($\la_{0}\neq1$) are given.
Assume that $S_{0}$ and $\wi{S}_{0}$ are two penetrable interfaces and
$\Om_{2}$ and $\wi{\Om}_{2}$ are two impenetrable obstacles with boundary conditions
$\mathscr{B}$ and $\wi{\mathscr{B}}$, respectively, for
the corresponding scattering problem. If the far field patterns of the scattered fields
for the same incident plane wave $u^{i}(x)=e^{ik_{0}x\cdot d}$ coincide at a fixed frequency
for all incident direction $d\in S$ and observation direction $\wi{x}\in S$,
then $S_0=\wi{S}_0$, $\Om_2=\wi{\Om}_2$ and $\mathscr{B}=\wi{\mathscr{B}}$.
\end{theorem}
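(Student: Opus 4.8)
The plan is to split the proof into two stages. The first and substantive stage proves that the two \emph{interfaces} coincide, $S_0=\wi{S}_0$; once this is known, the two problems share a common penetrable interface, the exterior fields agree in the common unbounded medium by Rellich's lemma, and the remaining identification $\Om_2=\wi{\Om}_2$ together with $\mathscr{B}=\wi{\mathscr{B}}$ is precisely the uniqueness result already established in \cite{LZH}. So the second stage is essentially a citation of \cite{LZH} applied to the common interface. For the first stage I would argue by contradiction, following the point-source technique of \cite{KK93, KP}: assume $S_0\neq\wi{S}_0$. Since $G$ is the unbounded component of $\R^3\setminus(\ov{\Om\cup\wi{\Om}})$ and is bounded in part by each interface, one may select a point $x^{\ast}\in\pa G$ lying on one interface, say $x^{\ast}\in S_0$, which belongs to the open homogeneous exterior of the other configuration, so that a whole neighbourhood $U$ of $x^{\ast}$ lies in $\wi{\Om}_0$ and stays at a fixed positive distance from $\wi{S}_0$ (if the roles of $S_0$ and $\wi{S}_0$ must be swapped the argument is symmetric). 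I would then probe both media with point sources $\Phi_{0}(\cdot,z_n)$ whose source points $z_n=x^{\ast}+\tfrac1n\nu(x^{\ast})$ lie in $G$ and converge to $x^{\ast}$.

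By Lemma \ref{lem3.1} the corresponding scattered fields agree on the common exterior, $u^{s}(\cdot;z_n)=\wi{u}^{s}(\cdot;z_n)$ on $\ov{G}$. On the \emph{regular} side this forces uniform boundedness: because the sources $z_n$ stay a fixed distance from $\wi{S}_0$, the Cauchy data $\Phi_{0}(\cdot,z_n)$ and $\pa\Phi_{0}(\cdot,z_n)/\pa\nu$ remain bounded in $C^{1,\al}(\wi{S}_0)$, so Theorem \ref{well-posedness} shows that the tilde solution, hence $\wi{u}^{s}(\cdot;z_n)=u^{s}(\cdot;z_n)$, is uniformly bounded on a fixed neighbourhood of $x^{\ast}$ in $\ov{G}$; indeed its trace $u^{s}_{+}$ and $\pa u^{s}_{+}/\pa\nu$ on $S_0\cap U$ extend smoothly across $S_0$ as the regular field $\wi{u}^{s}(\cdot;z_n)$. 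The a priori estimate of Lemma \ref{lemapriori} is the decisive tool on the \emph{true} side: the weighted $C_{0}(S_0)$-norms of $\Phi_{0}(\cdot,z_n)$ and $\pa\Phi_{0}(\cdot,z_n)/\pa\nu$ stay bounded as $z_n\to x^{\ast}$ (the weight $|x-x^{\ast}|$ cancels the point-source singularity), while away from $x^{\ast}$ the data are uniformly smooth; hence Lemma \ref{lemapriori} guarantees that the transmitted field $v_n$ and $\pa v_n/\pa\nu$ remain bounded on $S_0\setminus B_2$. This confines every singular contribution of the true solution to an arbitrarily small neighbourhood of $x^{\ast}$.

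The heart of the matter, and the step I expect to be the main obstacle, is to turn this confinement into a genuine blow-up and thereby contradict the boundedness coming from the regular side. On $S_0\cap U$ the transmission conditions (\ref{ltbc}) read $v_n=u^{s}_{+}+\Phi_{0}(\cdot,z_n)$ and $\pa v_n/\pa\nu=\tfrac{1}{\la_0}\big(\pa u^{s}_{+}/\pa\nu+\pa\Phi_{0}(\cdot,z_n)/\pa\nu\big)$, with $u^{s}_{+}$ and its normal derivative bounded by the previous step. Inserting $v_n$ into Green's first identity over $\Om_1$, using Lemma \ref{lemapriori} to control the contribution of $S_0\setminus B_2$ and the homogeneous condition $\mathscr{B}(v_n)=0$ to handle $S_1=\ov{\G}_0\cup\ov{\G}_1$, the only uncontrolled term is the self-interaction $\int_{S_0\cap B_2}\ov{\Phi}_{0}(\cdot,z_n)\,\pa\Phi_{0}(\cdot,z_n)/\pa\nu\,ds$, whose real part diverges like $n$ as $z_n\to x^{\ast}$; this forces the Dirichlet energy of $v_n$ over $\Om_1$ to blow up. I would then run a normalization argument: set $V_n=v_n/\|v_n\|_{H^1(\Om_1)}$, so $\|V_n\|_{H^1}=1$, and pass to a weak limit. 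By Lemma \ref{lemapriori} the normalized Cauchy data tend to $0$ on $S_0\setminus B_2$, so the limit $V_{\ast}$ has vanishing Cauchy data on $S_0$ away from the single point $x^{\ast}$, satisfies $\mathscr{B}(V_{\ast})=0$, and hence vanishes in $\Om_1$ by Holmgren's uniqueness theorem, contradicting $\|V_n\|_{H^1}=1$. Here Lemma \ref{lem3.3} supplies the uniform a priori $L^\infty$ bound for the impedance-type problem in the layer (the radiation-induced impedance on $S_0$ has exactly the sign $\eta\le0,\ \eta\neq0$ required there), which is what keeps the non-singular part of the limiting process under control. The hypothesis $\la_0\neq1$ enters precisely at the transmission step, ensuring the reflected and transmitted singularities cannot cancel and that the interface genuinely scatters; without it the blow-up, and hence the whole argument, collapses.

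Having reached a contradiction, I conclude $S_0=\wi{S}_0$. The second stage is then immediate: with a common interface the exterior scattered fields coincide in $\Om_0$, the transmission conditions propagate this equality into the shared layer, and the determination of $\Om_2=\wi{\Om}_2$ together with $\mathscr{B}=\wi{\mathscr{B}}$ follows from \cite{LZH}, which completes the proof. The delicate points I would watch most carefully are the existence and correct placement of $x^{\ast}$ (so that one medium genuinely sees it as a regular interior point and the other as an interface point) and the bookkeeping in the Green's-identity and normalization steps needed to isolate the diverging self-interaction while discarding the concentrated singularity in the weak limit.
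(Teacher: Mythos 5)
Your setup coincides with the paper's own proof up to and including the a priori bounds: the reduction to proving $S_0=\wi{S}_0$ (with $\Om_2=\wi{\Om}_2$ and $\mathscr{B}=\wi{\mathscr{B}}$ then cited from \cite{LZH}), the contradiction point $z_0\in S_0\ba\ov{\wi{\Om}}$, the point sources $z_j\to z_0$ along the normal, the identity $u^s_j=\wi{u}^s_j$ on $\ov{G}$ from Lemma \ref{lem3.1}, the uniform bound on $u_j$ and $\pa u_j/\pa\nu$ on $S_0\cap B_2$ from well-posedness of the tilde problem, and the uniform bound on $v_j$, $\pa v_j/\pa\nu$ on $S_0\ba B_2$ from Lemma \ref{lemapriori}. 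The gap is in the decisive step, where you convert these bounds into a contradiction. Your plan is: show via Green's first identity that $\|v_j\|_{H^1(\Om_1)}\to\infty$ (this part is plausible; the self-interaction term does diverge), then normalize $V_j=v_j/\|v_j\|_{H^1(\Om_1)}$, pass to a weak limit $V_{\ast}$, kill $V_{\ast}$ by Holmgren, and contradict $\|V_j\|_{H^1}=1$. That last implication is false: weak convergence does not preserve the norm, and $V_j\rightharpoonup 0$ is perfectly compatible with $\|V_j\|_{H^1}=1$. Worse, in this situation that is exactly what happens: the $H^1$ mass of $v_j$ concentrates in shrinking neighbourhoods of the boundary point $x^{\ast}$ where the singularity accumulates, so the normalized sequence concentrates there and its weak limit is zero; Holmgren then yields no contradiction, and no compactness is available to upgrade to strong convergence precisely because the concentration occurs at the one boundary point where the Cauchy data are uncontrolled. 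A symptom of the defect is that your blow-up mechanism never actually uses $\la_0\neq 1$ (the divergent term $\int_{S_0\cap B_2}\ov{\Phi}_0\,\pa\Phi_0/\pa\nu\,ds$ is independent of $\la_0$), whereas the hypothesis is essential; your remark that it ``enters at the transmission step'' is not substantiated by anything in the argument.

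What the paper does instead is never to show blow-up of $v_j$ at all, but to derive two uniform bounds near $x^{\ast}$ that are mutually inconsistent when $\la_0\neq1$. First, the transmission condition plus the boundedness of $u_j$ give $\|v_j-\Phi_0(\cdot,z_j)\|_{\infty,S_0\cap B_2}\le C$. Second, the function $w_j:=\la_0 v_j-\Phi_0(\cdot,z_j)$ solves an interior problem in $\Om_1$ of exactly the type of Lemma \ref{lem3.3}, with an \emph{artificially chosen} non-positive impedance cutoff $\eta$ supported in $S_0\ba B_2$ (not a ``radiation-induced'' impedance, as you describe it); its data are uniformly bounded thanks to Lemma \ref{lemapriori}, to the bound on $\la_0\pa v_j/\pa\nu-\pa\Phi_0(\cdot,z_j)/\pa\nu$ on all of $S_0$ (obtained by combining the transmission condition on $S_0\cap B_2$ with Lemma \ref{lemapriori} on $S_0\ba B_2$), and to the fact that $\G_0,\G_1$ are far from $z_0$. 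Lemma \ref{lem3.3} then yields $\|\la_0 v_j-\Phi_0(\cdot,z_j)\|_{\infty,S_0}\le C$, i.e.\ a second, competing bound valid also on $S_0\cap B_2$. The triangle inequality gives $\|(\la_0-1)\Phi_0(\cdot,z_j)\|_{\infty,S_0\cap B_2}\le C$ uniformly in $j$, contradicting the blow-up of $\Phi_0(z_0,z_j)$; this is precisely where $\la_0\neq1$ does the work. To repair your proof you would have to replace the normalization/Holmgren step by such a two-bound comparison (or some other mechanism that rules out concentration at $x^{\ast}$); as written, the contradiction never materializes.
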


\begin{proof}
We just need to prove that $S_{0}=\wi{S}_{0}$ since the remanning part of the theorem
then follows from this and Theorem 3.7 in \cite{LZH}. Let $G$ be defined as in Lemma \ref{lem3.1}.
Assume that $S_{0}\neq\wi{S}_{0}$. Then, without loss of generality,
we may assume that there exists $z_{0}\in S_{0}\ba\ov{\wi{\Om}}$.
Let $B_{2}$ be a small ball centered at $z_{0}$ such that $B_{2}\cap\ov{\wi{\Om}}=\emptyset$.
Choose $h>0$ such that the sequence
\ben
 z_j:=z_0+\frac{h}{j}\nu(z_{0}), \qquad j=1,2,\ldots,
\enn
is contained in $G\cap B_{2}$, where $\nu(z_{0})$ is the outward normal to
$S_{0}$ at $z_{0}$. Using the notations in Lemma \ref{lem3.1} and letting
$(u^{s}_{j},v_{j})$ and $(\wi{u}^{s}_{j},\wi{v}_{j})$ be the solutions of (\ref{lOm0})-(\ref{lrc})
with $z=z_j$. Then, by Lemma \ref{lem3.1}, $u^{s}_{j}=\wi{u}^{s}_{j}:=u_{j}$ in $\ov{G}$.
Since $z_{0}$ has a positive distance from $\ov{\wi{\Om}}$,
we conclude from the well-posedness of the direct scattering problem that
there exists $C>0$ such that
\be\label{ujes}
\|u_j\|_{\infty,S_0\cap B_2}+\left\|\frac{\pa u_j}{\pa\nu}\right\|_{\infty,S_0\cap B_2}
\leq C\qquad\mbox{for all}\;\; j\geq1.
\en
Choose a small ball $B_{1}$ with center $z_{0}$ which is strictly contained in $B_{2}$. Since
\ben
\|\Phi_{0}(\cdot,z_j)\|_{C_{0}(S_{0})}+\|\Phi_{0}(\cdot,z_j)\|_{1,\al,S_{0}\ba B_{1}}&\leq& C,\\
\|\frac{\pa\Phi_{0}}{\pa\nu}(\cdot,z_j)\|_{C_{0}(S_{0})}
   +\|\frac{\pa\Phi_{0}}{\pa\nu}(\cdot,z_j)\|_{0,\al,S_{0}\ba B_{1}}&\leq& C
\enn
for some positive constant independent of $j$, we conclude from Lemma \ref{lemapriori} that
\ben
\|v_j\|_{\infty,S_0\ba B_2}+\left\|\frac{\pa v_j}{\pa\nu}\right\|_{\infty,S_0\ba B_2}
\leq C\qquad\mbox{for all}\;\; j\geq1.
\enn
From this it follows that
\be\label{vj-Phi}
\|\la_{0}v_j-\Phi_0(\cdot,z_j)\|_{\infty,S_0\ba B_2}
    &\leq& C\qquad\mbox{for all}\;\;j\geq1,\\ \label{pavj-Phi}
\left\|\la_0\frac{\pa v_j}{\pa\nu}-\frac{\pa\Phi_0(\cdot,z_j)}{\pa\nu}
    \right\|_{\infty,S_0\ba B_2}&\leq& C\qquad\mbox{for all}\;\; j\geq1.
\en
The transmission boundary conditions yield
\be\label{vj-Phi2}
\|v_j-\Phi_{0}(\cdot,z_j)\|_{\infty, S_{0}\cap B_{2}}=\|u_j\|_{\infty, S_{0}\cap B_{2}}
   &\leq& C\qquad \mbox{for all}\;\; j\geq1,\\ \label{pavj-Phi2}
\left\|\la_{0}\frac{\pa v_{j}}{\pa\nu}-\frac{\pa \Phi_{0}(\cdot,z_j)}{\pa\nu}
   \right\|_{\infty, S_0\cap B_2}=\left\|\frac{\pa u_j}{\pa\nu}\right\|_{\infty,S_0\cap B_2}
   &\leq& C\qquad \mbox{for all}\;\; j\geq1.
\en
Combining (\ref{pavj-Phi}) and (\ref{pavj-Phi2}) yields
\be\label{pavj-Phi3}
\left\|\la_{0}\frac{\pa v_{j}}{\pa\nu}-\frac{\pa\Phi_0(\cdot,z_j)}{\pa\nu}
\right\|_{\infty, S_{0}}\leq C\qquad\mbox{for all}\;\; j\geq1.
\en
This can be used together with (\ref{vj-Phi}) to prove the estimate
\be\label{vj-Phi3}
\|\la_{0}v_j-\Phi_{0}(\cdot,z_j)\|_{\infty,S_{0}}\leq C \qquad \mbox{for all}\;\; j\geq1.
\en
In fact, choose a non-positive function $\eta\in C^{2}(S_{0}),$ $\eta\not\equiv0$ and
supported in $S_{0}\ba B_{2}$. Then $w_{j}:=\la_{0}v_j-\Phi_{0}(\cdot,z_j)$ solves
the following boundary value problem:
\ben
\Delta w_j+k_1^2w_j=(k^2_0-k^2_1)\Phi_0(\cdot,z_j)&&\qquad\mbox{in}\;\;\Om_1,\\
\frac{\pa w_j}{\pa\nu}+i\eta w_j=(\frac{\pa}{\pa\nu}+i\eta)[\la_0v_j-\Phi_0(\cdot,z_j)]
  &&\qquad\mbox{on}\;\;S_0,\\
w_j=-\Phi_0(\cdot,z_j)&&\qquad\mbox{on}\;\;\G_{0},\\
\frac{\pa w_j}{\pa\nu}+i\la w_j=-(\frac{\pa}{\pa\nu}+i\la)\Phi_{0}(\cdot,z_j)
  &&\qquad \mbox{on}\;\; \G_{1}.
\enn
Since, by (\ref{vj-Phi}) and (\ref{pavj-Phi3}),
$f:=(k^{2}_{0}-k^{2}_{1})\Phi_{0}(\cdot,z_j)\in L^{2}(\Om_{1}),$
$h:=-\Phi_{0}(\cdot,z_j)\in C(\G_{0}),$
$p:=-(\frac{\pa }{\pa\nu}+i\la)\Phi_{0}(\cdot,z_j)\in C(\G_{1})$ and
$g:=(\frac{\pa }{\pa\nu}+i\eta) [\la_{0}v_j-\Phi_{0}(\cdot,z_j)]\in C(S_{0})$,
then the desired result (\ref{vj-Phi3}) follows from Lemma \ref{lem3.3}.

Now the triangle inequality together with (\ref{vj-Phi2}) and (\ref{vj-Phi3}) implies that
\ben
\|(\la_{0}-1)\Phi_{0}(\cdot,z_j)\|_{\infty, S_{0}\cap B_{2}}
 &\leq& \|\la_{0}\Phi_{0}(\cdot,z_j)-\la_{0}v_j\|_{\infty, S_{0}\cap B_{2}}
   + \|\la_{0}v_j-\Phi_{0}(\cdot,z_j)\|_{\infty, S_{0}\cap B_{2}}\\
 &\leq& C.
\enn
This is a contradiction since $\la_0\neq1$ and
$|\Phi_0(z_0,z_j)|_{\infty,S_0\cap B_2}\rightarrow\infty$ as $j\rightarrow\infty.$
The proof is thus complete.
\end{proof}

\begin{remark}{\rm
Our method can be extended straightforwardly to both the 2D case and the case of a
multilayered medium, and a similar result can be obtained (that is, all the interfaces
between the layered media as well as the embedded obstacle can be uniquely determined).
}
\end{remark}

\section*{Acknowledgements}
This work was supported by the NNSF of China under grant No. 10671201.


\end{document}